\theoremstyle{break}
\newtheorem{thm}{TeiRi}[section]
\newtheorem{Prop}[thm]{Proposition}
\newtheorem{Lem}[thm]{Lemma}
\newtheorem{Thm}[thm]{Theorem}
\newtheorem{Cor}[thm]{Corollary}
\newtheorem{Exa}[thm]{Example}
\newcommand{\Or}{\mathcal{O}}
\newcommand{\afrak}{\mathfrak{a}}
\newcommand{\bfrak}{\mathfrak{b}}
\newcommand{\cfrak}{\mathfrak{c}}
\newcommand{\pfrak}{\mathfrak{p}}
\newcommand{\Z}{\mathbb{Z}}
\newcommand{\Q}{\mathbb{Q}}
\newcommand{\maru}[1]{\raise0.2ex\hbox{\textcircled{\scriptsize{#1}}}}
\title{Integers of the Form $ax^2+bxy+cy^2$}
\author{Naoki Uchida\thanks{Tokyo University of Science.}
}
\date{\today}
\newcommand{\subjclass}[2][2010]{%
  \let\@oldtitle\@title%
  \gdef\@title{\@oldtitle\footnotetext{#1 \emph{Mathematics subject classification$(s)$.} #2}}%
}
\newcommand{\keywords}[1]{%
  \let\@@oldtitle\@title%
  \gdef\@title{\@@oldtitle\footnotetext{\emph{Key words and phrases.} #1.}}%
}
\subjclass{11E16, 
11E25
, 
11N32
, 
11R65.
} 
\keywords{binary quadratic forms, 
representation of integers by quadratic forms, 
Diophantine equations, 
orders in 
quadratic fields, 
decomposition of ideals
} 
\begin{document}

\maketitle

\begin{abstract}
We provide a characterization of integers represented by the positive definite binary quadratic form $ax^2+bxy+cy^2$. In order to prove the main theorem, we define the ``relative conductor'' of two orders in an imaginary quadratic field. Then we provide a characterization of decomposition of proper ideals of orders in imaginary quadratic fields. Moreover we present some interesting examples of the main theorem. 
\end{abstract}

\section{Introduction}%
Fermat stated that for a prime $p$, the following holds.
\[
\text{$p=x^2+y^2$ has an integer solution 
$\iff$ 
$p=2$ or $p\equiv1 \bmod4$.}
\]
Many mathematicians, including 
Lagrange, Legendre and Gauss, developed genus theory and the theory of composition 
of quadratic forms. 
These theories enable us to prove that 
for a prime $p$, the following holds.
\begin{align*}
\text{$p=3x^2+2xy+3y^2$ has an integer solution 
$\iff$ 
$p\equiv3 \bmod8$.} 
\end{align*}
Gauss's published works mention cubic reciprocity \cite{Gau}. 
This implies 
that for a prime $p$, the following holds.
\begin{align*}
&\text{$p=4x^2+2xy+7y^2$ has an integer solution} \\
\iff 
&\text{$p\equiv1 \bmod3$ and $2$ is not a cubic residue modulo $p$.} 
\end{align*}
Cox stated a characterization of primes of the form $x^2+ny^2$, where $n$ is a fixed positive integer \cite{Cox}. 

However, it is natural to consider 
what happens if we replace a prime $p$ with an arbitrary integer
. 
There are some previous studies about it. 
For example, 
Fermat knew 
an equivalent condition for when 
integers can be written as $x^2+y^2$, where $x, y$ are some integers \cite[Chapter 2]{Gro}. 
Koo and Shin gave an equivalent condition for when the equation $pq=x^2+ny^2$ has an integer solution, where $n$ is a fixed positive integer, and $p, q$ are distinct odd primes not dividing $n$ \cite{Koo}. 
Cho presented a characterization of integers, relatively prime to 
$2nm$, 
represented by the form $x^2+ny^2$
 with $x\equiv1\bmod m$, $y\equiv0\bmod m$, where $m, n$ are fixed positive integers \cite{Cho}. 
He also presented a characterization of integers, relatively prime to 
$2(1-4n)m$, 
represented by the form $x^2+xy+ny^2$
 with $x\equiv1\bmod m$, $y\equiv0\bmod m$, where $m, n$ are fixed positive integers \cite{Cho2}. 

However, we present a characterization of 
integers, not necessarily prime to the discriminant $D=b^2-4ac$, represented by the positive definite binary quadratic form $ax^2+bxy+cy^2$, where $a, b, c\in\Z$ 
(see Theorem \ref{MainTheorem}). 
This is the main theorem in this paper. 

The main theorem leads to the following example which Fermat knew. 

\begin{Exa}\label{exa-4}
Let $m$ be an arbitrary positive integer. 
Write 
\[
m=p_1\cdots p_r \cdot {q_1}^{e_1}\cdots{q_s}^{e_s},
\]
where
\begin{itemize}
\item the $p_i$'s are primes with $p_i=2$ or $p_i\equiv1 \bmod4$, 
\item the $q_j$'s are distinct primes with $q_j\equiv3 \bmod4$, 
\item $r, s\geq 0$, $e_j >0$.
\end{itemize}
Then, the followings are equivalent.
\begin{enumerate}[label=$(\mathrm{\roman*})$]
\item $m=x^2+y^2$ has an integer solution.
\item All $e_j$'s are even. 
\end{enumerate}
\end{Exa}

Moreover the main theorem leads to the following interesting examples.

\begin{Exa}\label{exa-32}
Let $m$ be an arbitrary positive integer. 
Write 
\[
m=p_1\cdots p_r \cdot {q_1}^{e_1}\cdots{q_s}^{e_s} \cdot {2}^{h},
\]
where
\begin{itemize}
\item the $p_i$'s are primes with $p_i\equiv1, 3 \bmod8$, 
\item the $q_j$'s are distinct primes with $q_j\equiv5, 7 \bmod8$. 
\item $r, s\geq 0$, $e_j >0$, $h \geq0$.
\end{itemize}
Then, the followings are equivalent.
\begin{enumerate}[label=$(\mathrm{\roman*})$]
\item $m=3x^2+2xy+3y^2$ has an integer solution.
\item All $e_j$'s are even, and 
one of the following holds.
\begin{enumerate}[label=$(\mathrm{\alph*})$]
\item the number of primes $p_k$'s with $p_k\equiv3 \bmod8$ is odd, $h=0$, 
\item $h\geq2$. 
\end{enumerate}
\end{enumerate}
\end{Exa}

Note that we do not assume that $m$ is relatively prime to $2$. 
\begin{Exa}\label{exa-108}
Let $m$ be an arbitrary positive integer. 
Write 
\[
m=p_1\cdots p_r \cdot {q_1}^{e_1}\cdots{q_s}^{e_s} \cdot {2}^{h_2} {3}^{h_3}, 
\]
where 
\begin{itemize}
\item the $p_i$'s are primes with $p_i\equiv1 \bmod3$, 
\item the $q_j$'s are distinct primes with $q_j\neq2$ and $q_j\equiv2 \bmod3$, 
\item $r, s\geq 0$, $e_j >0$, $h_2 \geq0$, $h_3 \geq0$. 
\end{itemize}
Then, the followings are equivalent.
\begin{enumerate}[label=$(\mathrm{\roman*})$]
\item $m=4x^2+2xy+7y^2$ has an integer solution.
\item All $e_j$'s are even, and 
one of the following holds.
\begin{enumerate}[label=$(\mathrm{\alph*})$]
\item 
There exists at least one $p_k$ satisfying 
that $2$ is not a cubic residue 
modulo $p_k$, $h_2=h_3=0$, 
\item $h_2$ is even and $h_3\neq1$, except for $(h_2,h_3)=(0,0)$. 
\end{enumerate}
\end{enumerate}
\end{Exa}

Note that we do not assume that $m$ is relatively prime to $2$ or $3$. 
Moreover, the form is not a principle form $x^2+ny^2$. 

The outline of this paper is as follows. 
In Section \ref{Themainth}, we state the main theorem in this paper, and explain the idea of 
its proof 
. 
In Section \ref{Orders}, we prepare some propositions for orders in imaginary quadratic fields. 
In Section \ref{DecompOk}, we recall a characterization of decomposition of ideals of imaginary quadratic fields. 
In Section \ref{Relative}, we define the ``relative conductor'' of two orders in an imaginary quadratic field. 
Using the relative conductor, we provide a characterization of decomposition of proper ideals of orders in imaginary quadratic fields in Section \ref{Decomp}. 
In Section \ref{ProofoftheM}, we derive the main theorem by using the above characterization. 
In Section \ref{ExamplesoftheM}, we see some examples of the main theorem, including Examples \ref{exa-4}, \ref{exa-32} and \ref{exa-108}. 
Furthermore, we provide characterizations of prime powers $l^h$, where $l$ divide the conducter, represented by the positive definite binary quadratic form $ax^2+bxy+cy^2$.


\section{The main theorem}\label{Themainth}
Let $f(x,y)=ax^2+bxy+cy^2$ be an integral binary quadratic form (for short, a form).
It is said that $f(x,y)$ is {\it{primitive}} if its coefficients $a,b$ and $c$ are relatively prime. 
Note that any form is an integer multiple of a primitive form, 
thus we will exclusively consider 
primitive forms. 
An integer $m$ is {\it{represented}} by a form $f(x,y)$ if the equation 
\[
m=f(x,y)
\]
has an integer solution in $x$ and $y$.
If such $x$ and $y$ are relatively prime, we say that $m$ is {\it{properly represented}} by $f(x,y)$.

We say that two forms $f(x,y)$ and $g(x,y)$ are {\it{properly equivalent}} if there is an element 
$
\begin{pmatrix}
 p & q \\
 r & s 
\end{pmatrix}
$
$\in \mathrm{SL}(2,\Z)$ such that
\[
f(x,y)=g(px+qy,rx+sy).
\]
The 
proper equivalence of forms 
is an 
equivalence relations.
An important thing is that 
properly 
equivalent forms represent 
the same 
numbers, 
and the same is true for proper representations. 
Note also that any form properly equivalent to a primitive form is itself primitive.

We define the discriminant $D$ of $ax^2+bxy+cy^2$ to be $D=b^2-4ac$.
Note that $D\equiv0, 1\bmod4$, and 
properly equivalent forms have the same discriminant. 

We say that a form $f(x,y)$ is {\it{positive definite}} 
if 
$f(x,y)$ represents only positive integers 
when $x,y\neq 0$. 
Any positive definite form has a negative discriminant. 
From this point, we will specialize to the primitive positive definite case. 

We denote by $C(D)$ the set of proper equivalence classes 
of positive definite forms of discriminant $D$.
Then the Dirichlet composition induces a well-defined binary operation on $C(D)$ which makes  $C(D)$ into a finite Abelian group (see Cox \cite[Theorem 3.9]{Cox}). 
We say $C(D)$ is the {\it{form class group}}.

Now we prepare some notations from algebraic number theory. 
For a 
quadratic 
field $K$, 
we denote by $\Or_K$, $d_k$ and $I_K$ the ring of integers of $K$, the discriminant of $K$ and 
the group of all fractional ideals of $K$, 
respectively.

Now we state the main theorem in this paper. 
\begin{Thm}\label{MainTheorem}
Let $ax^2+bxy+cy^2$ be a primitive positive definite form of discriminant $D$. 
Suppose that $K=\Q(\sqrt{D})$, $f=\sqrt{D/d_K}$. 
Write 
$f={l_1}^{\lambda_1}\cdots{l_t}^{\lambda_t}$, 
where 
$t\geq 0$, $\lambda_k >0$, and 
the $l_k$'s are distinct primes.
Let $m$ be an arbitrary positive integer. 
Write 
\[
m=p_1\cdots p_r \cdot {q_1}^{e_1}\cdots{q_s}^{e_s} \cdot {l_1}^{h_1}\cdots{l_t}^{h_t}, 
\]
where 
\begin{itemize}
\item the $p_i$'s are primes relatively prime to $f$ with $(D/p_i)=0,1$, 
\item the $q_j$'s are distinct primes relatively prime to $f$ with $(D/q_j)=-1$, 
\item $r, s\geq 0$, $e_j >0$, $h_k \geq0$. 
\end{itemize}
Note that $(D/p)$ is the Legendre symbol. 
Then, the followings are equivalent.
\begin{enumerate}[label=$(\mathrm{\roman*})$]
\item $m=ax^2+bxy+cy^2$ has an integer solution.
\item 
All $e_j$'s are even, 
and 
there exist 
\begin{itemize}
\item primitive positive definite forms $f_i(x,y)$'s of discriminant $D$ representing $p_i$,
\item primitive positive definite forms $g_k(x,y)$'s of discriminant $D$ representing ${l_k}^{h_k}$, 
\end{itemize}
such that 
\[
\text{$[ax^2+bxy+cy^2]=[f_1(x,y)]\cdots[f_r(x,y)]\cdot[g_1(x,y)]\cdots[g_t(x,y)]$ in $C(D)$}. 
\]
\end{enumerate}
\end{Thm}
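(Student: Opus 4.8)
The plan is to translate everything into ideals of the order $\mathcal{O}$ of discriminant $D$ in $K=\Q(\sqrt{D})$, whose conductor is $f$, to run a local analysis prime by prime, and then to reassemble. The first tool is the classical dictionary between forms and ideals: the assignment $ax^2+bxy+cy^2\mapsto\bigl[a\Z+\tfrac{-b+\sqrt{D}}{2}\Z\bigr]$ induces an isomorphism $C(D)\xrightarrow{\sim}C(\mathcal{O})$ onto the ideal class group of proper (invertible) $\mathcal{O}$-ideals, carrying Dirichlet composition to ideal multiplication, and under it the classes of forms of discriminant $D$ that \emph{properly} represent a positive integer $n$ correspond precisely to the classes of proper $\mathcal{O}$-ideals of norm $n$. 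The second tool is the reduction from representation to proper representation: since properly equivalent forms represent the same integers, ``$m$ is represented by $ax^2+bxy+cy^2$'' depends only on the class $\gamma=[ax^2+bxy+cy^2]\in C(D)$, and dividing out the square of $\gcd(x,y)$ shows that $m$ is represented by $\gamma$ if and only if there is a proper $\mathcal{O}$-ideal $\mathfrak{b}$ with $N\mathfrak{b}\mid m$, with $m/N\mathfrak{b}$ a perfect square, and with $[\mathfrak{b}]=\gamma$ (the two implications being: divide out the square, respectively scale a proper representation of $m/N\mathfrak{b}$ by $\sqrt{m/N\mathfrak{b}}$).

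Next I would decompose such a $\mathfrak{b}$ along the primes dividing $m$. For the $p_i$ and $q_j$, which are prime to $f$, the proper $\mathcal{O}$-ideals supported there behave exactly like ideals of $\mathcal{O}_K$ and factor uniquely; for the $l_k\mid f$, the splitting of the $l_k$-primary part of $\mathfrak{b}$ into proper ideals is governed by the relative conductor and the characterization of decomposition of proper ideals from Sections \ref{Relative}--\ref{Decomp}. Writing $\mathfrak{b}=\prod_i\mathfrak{P}_i\cdot\prod_j(q_j)^{b_j}\cdot\prod_k\mathfrak{B}_k$ accordingly, the constraints can be read off. Since the $p_i$-part of $m$ is $p_i^1$ and $m/N\mathfrak{b}$ is a square, each $\mathfrak{P}_i$ must have norm exactly $p_i$, i.e.\ $[\mathfrak{P}_i]=[f_i]$ for a form $f_i$ of discriminant $D$ representing $p_i$; conversely every such $f_i$ arises this way, because $p_i$, being squarefree, can only be represented properly. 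Since $q_j$ is inert in $\mathcal{O}$, its contribution is a power of the principal ideal $(q_j)$, of norm $q_j^{2b_j}$: this forces $e_j-2b_j$ to be even, hence all $e_j$ must be even, and it is trivial in the class group. Finally $\mathfrak{B}_k$ has norm $l_k^{c_k}$ with $c_k\le h_k$ and $h_k-c_k$ even, so the form properly representing $l_k^{c_k}$ also represents $l_k^{h_k}$, i.e.\ $[\mathfrak{B}_k]=[g_k]$ for a form $g_k$ of discriminant $D$ representing $l_k^{h_k}$; and conversely, a form representing $l_k^{h_k}$ properly represents some $l_k^{c_k}$ with $c_k\le h_k$ and $h_k-c_k$ even. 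Passing to classes gives $\gamma=[\mathfrak{b}]=\prod_i[\mathfrak{P}_i]\cdot\prod_k[\mathfrak{B}_k]=[f_1]\cdots[f_r]\cdot[g_1]\cdots[g_t]$.

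Reading this chain of equivalences in both directions yields the theorem: (i) holds if and only if a proper ideal $\mathfrak{b}$ as above exists, if and only if all $e_j$ are even and $\gamma$ lies in the set of products $[f_1]\cdots[f_r][g_1]\cdots[g_t]$ as the $f_i$ range over forms of discriminant $D$ representing $p_i$ and the $g_k$ over forms of discriminant $D$ representing $l_k^{h_k}$ --- which is precisely condition (ii).

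The step I expect to be the main obstacle is the local analysis at the primes $l_k$ dividing the conductor $f$. Away from $f$ everything collapses to unique factorization of ideals in the Dedekind domain $\mathcal{O}_K$, but at $l_k$ the order $\mathcal{O}$ is non-maximal, its ideals need not factor uniquely, and one must determine exactly which pairs (power of $l_k$ as a norm, ideal class) are realized by proper $\mathcal{O}$-ideals, and how the $l_k$-primary part of a general proper ideal splits off; this is exactly the role of the relative conductor and the decomposition theorem of Section \ref{Decomp}, and applying them correctly --- in particular identifying ``$g_k$ represents $l_k^{h_k}$'' with the set of classes of proper ideals of norm $l_k^{c_k}$ for $c_k\equiv h_k\pmod{2}$, $c_k\le h_k$ --- is the delicate part. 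The form/ideal dictionary and the square-factor reduction, by contrast, are routine.
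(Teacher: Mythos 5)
Your proposal is correct and follows essentially the same route as the paper: translate via the isomorphism $C(D)\simeq C(\Or)$ to proper $\Or$-ideals, factor the ideal using Dedekind factorization in $\Or_K$ away from the conductor and the relative-conductor decomposition (Theorem \ref{yosoCor}) at the primes $l_k\mid f$, then pass to ideal classes; the converse direction is the same composition-plus-squares argument as Lemmas \ref{Dircomp} and \ref{mn^2}. The only cosmetic difference is that you reduce to proper representations and primitive ideals with $N\bfrak\mid m$ and $m/N\bfrak$ a square, whereas the paper invokes Proposition \ref{C(D),C(O)}(iii) to get an ideal of norm exactly $m$ and feeds it directly into Theorem \ref{yosoCor}.
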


The proof of Theorem \ref{MainTheorem}-[(ii)$\Rightarrow$(i)] is relatively easy. 
In order to prove this, we prepare the following lemmas.

\begin{Lem}\label{Dircomp}
Let $f(x,y)$, $g(x,y)$, $h(x,y)$ be primitive positive definite forms of discriminant $D$. 
Assume that 
$f(x,y)$, $g(x,y)$ represent integers $m$,$n$, respectively, 
and $[f(x,y)][g(x,y)]=[h(x,y)]$ in $C(D)$. 
Then $h(x,y)$ represents $mn$. 
\end{Lem}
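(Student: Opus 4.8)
The plan is to exploit the classical fact that Dirichlet composition of forms mirrors multiplication of ideals in the quadratic order $\Or$ of discriminant $D$. First I would recall the correspondence between $C(D)$ and the ideal class group of the order $\Or=\Z[(D+\sqrt D)/2]$: to a primitive positive definite form $ax^2+bxy+cy^2$ one associates the proper $\Or$-ideal $\afrak=\Z a+\Z\frac{-b+\sqrt D}{2}$, and this induces a group isomorphism under which Dirichlet composition corresponds to ideal multiplication (Cox, Theorem 7.7). The key arithmetic identity is that the norm $N(\afrak)$ of the ideal attached to $f$ equals the leading coefficient $a$, and more importantly that an integer $k$ is represented by $f$ if and only if there is an ideal $\bfrak$ in the class of $\afrak$ (equivalently, an ideal $\bfrak$ with $\afrak\bfrak$ principal up to the relevant class) with $N(\bfrak)=k$; the cleanest version is: $f$ represents $k$ iff $k=N(\afrak^{-1}\gamma)$ for some $\gamma\in\afrak$, i.e. $k\Or$ is norm of an ideal equivalent to $\afrak$.

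Concretely I would argue as follows. Since $f$ represents $m$, there is a proper $\Or$-ideal $\afrak$ in the class $[f]$ and an element $\alpha\in\afrak$ with $N(\alpha)=m\,N(\afrak)$, so that $\mfrak:=\alpha\afrak^{-1}$ is an integral proper $\Or$-ideal of norm $m$ lying in the class $[f]^{-1}$. Likewise $g$ represents $n$ gives an integral proper $\Or$-ideal $\nfrak$ of norm $n$ in the class $[g]^{-1}$. Then $\mfrak\nfrak$ is an integral proper $\Or$-ideal of norm $mn$ whose class is $[f]^{-1}[g]^{-1}=\big([f][g]\big)^{-1}=[h]^{-1}$. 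Translating back through the form--ideal dictionary, an integral proper ideal of norm $mn$ in the class $[h]^{-1}$ shows that the form $h$ (the form attached to the inverse class of $[h]^{-1}$) represents $mn$. I would make sure the bookkeeping of which class gets inverted is consistent — the standard convention is that the form class of an ideal $\afrak$ is represented by $f_{\afrak}(x,y)=\frac{N(\alpha x+\beta y)}{N(\afrak)}$ for a basis $\afrak=\Z\alpha+\Z\beta$, and that $f_{\afrak}$ represents exactly the norms $N(\bfrak)$ of integral ideals $\bfrak$ with $\bfrak\sim\afrak$.

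An alternative, more self-contained route avoids ideals entirely and uses only the definition of Dirichlet composition: by replacing $f$ and $g$ with properly equivalent forms (which changes neither the represented numbers nor the classes) one may assume $f(x,y)=a_1x^2+Bxy+a_2Cy^2$ and $g(x,y)=a_2x^2+Bxy+a_1Cy^2$ with $\gcd(a_1,a_2)=1$ and common middle coefficient $B$ (this is the "concordant forms" normalization underlying Dirichlet composition, Cox \S3). Then the composite is $h(x,y)=a_1a_2x^2+Bxy+Cy^2$, and one checks the Gauss/Dirichlet identity
\[
(a_1x_1^2+Bx_1y_1+a_2Cy_1^2)(a_2x_2^2+Bx_2y_2+a_1Cy_2^2)=a_1a_2X^2+BXY+CY^2
\]
for suitable bilinear $X=X(x_1,y_1,x_2,y_2)$, $Y=Y(x_1,y_1,x_2,y_2)$. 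Specializing $(x_1,y_1)$ to a representation of $m$ by $f$ and $(x_2,y_2)$ to a representation of $n$ by $g$ exhibits $mn$ as a value of $h$. The main obstacle in either approach is purely administrative: keeping the correspondence between composition of classes and the chosen orientation of ideal multiplication straight so that $h$, and not its inverse or conjugate, is the form that ends up representing $mn$; once the normalization is fixed, the computation is the classical composition identity and requires no new ideas.
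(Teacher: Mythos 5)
Your proposal is correct, and your ``alternative, more self-contained route'' is in fact exactly the paper's proof: the paper disposes of this lemma in one line (``this follows immediately by definition of composition''), which is precisely the observation that the Gauss--Dirichlet bilinear identity $f(x_1,y_1)g(x_2,y_2)=h(X,Y)$ for concordant representatives, specialized at representations of $m$ and $n$, yields a representation of $mn$ by $h$. Your primary, ideal-theoretic route is a genuinely different (though equally standard) argument, and it meshes well with machinery the paper develops anyway: by Proposition \ref{C(D),C(O)}(iii), $m$ is represented by $f$ if and only if $m=N(\mathfrak{m})$ for some proper $\Or$-ideal $\mathfrak{m}$ in the class corresponding to $[f(x,y)]$, so one takes $\mathfrak{m}$, $\mathfrak{n}$ of norms $m$, $n$ in the classes of $[f]$, $[g]$, and $\mathfrak{m}\mathfrak{n}$ has norm $mn$ (Proposition \ref{normseki}) and lies in the class of $[h]$ --- done, with no inverses and hence none of the orientation bookkeeping you worry about; your version with $\alpha\mathfrak{a}^{-1}$ works too but inverts every class uniformly, so the inversions cancel. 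What the composition-identity route buys is independence from the form--ideal dictionary; what the ideal route buys is that it avoids the concordance normalization and makes the multiplicativity of represented values a one-line consequence of multiplicativity of the ideal norm.
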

\begin{proof}
This follows immediately by definition of composition. 
\end{proof}

\begin{Lem}\label{mn^2}
Let $f(x,y)$ be a primitive positive definite form of discriminant $D$, 
and let $n$ be an integer. 
If $f(x,y)$ represents an integer $m$, then $f(x,y)$ represents $mn^2$. 
\end{Lem}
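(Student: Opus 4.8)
The plan is to reduce the claim to the fact that the identity element of $C(D)$ is the class of the principal form, which represents $1$, and then apply Lemma \ref{Dircomp}. Concretely, if $f(x,y)$ represents $m$, I want to produce a form $h(x,y)$ of discriminant $D$ that represents $n^2$ and satisfies $[h(x,y)] = [\text{principal form}]$ in $C(D)$; composing with $f$ then yields a form properly equivalent to $f$ itself that represents $m \cdot n^2$, and since properly equivalent forms represent the same integers, $f(x,y)$ represents $mn^2$.

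First I would handle the trivial case $n = 0$, where $mn^2 = 0 = f(0,0)$, so there is nothing to prove; likewise $n$ and $-n$ give the same $n^2$, so I may assume $n \geq 1$. The key step is then to exhibit the principal form of discriminant $D$ representing $n^2$: the principal form is $x^2 - \tfrac{D}{4}y^2$ when $D \equiv 0 \bmod 4$ and $x^2 + xy + \tfrac{1-D}{4}y^2$ when $D \equiv 1 \bmod 4$, and in either case plugging in $(x,y) = (n,0)$ gives $n^2$. Since this form is primitive, positive definite (as $D < 0$), and of discriminant $D$, and its class is the identity of $C(D)$, Lemma \ref{Dircomp} applied with $g$ the principal form, $h$ equal to $f$ (because $[f][\text{principal}] = [f]$), and the represented integers $m$ and $n^2$ shows that $f(x,y)$ represents $mn^2$.

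Alternatively, and perhaps more directly, I could avoid the form class group entirely: if $f(x_0, y_0) = m$ for integers $x_0, y_0$, then $f(nx_0, ny_0) = n^2 f(x_0, y_0) = mn^2$ by homogeneity of degree $2$ of the quadratic form. This is a one-line argument and is really all that is needed. I would present this elementary version as the proof, mentioning the Dirichlet-composition viewpoint only as a remark if desired.

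The main obstacle here is essentially nonexistent — the statement is immediate from homogeneity. The only thing to be careful about is not to overcomplicate it: there is no need to invoke proper representation (the $x,y$ produced need not be coprime even if the original ones were), no need for the class group, and no subtlety about the discriminant since multiplying a solution vector by $n$ preserves the form. So I expect the write-up to be two or three lines at most.
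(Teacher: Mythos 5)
Your proposal is correct, and the elementary argument you settle on --- $f(nx_0,ny_0)=n^2f(x_0,y_0)=mn^2$ by degree-$2$ homogeneity --- is exactly what the paper means by ``this follows immediately by the form $f(x,y)$.'' The composition-based detour in your first paragraph is unnecessary, as you yourself note, so presenting only the one-line version is the right call.
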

\begin{proof}
This follows immediately by the form $f(x,y)$. 
\end{proof}

Theorem \ref{MainTheorem}-[(ii)$\Rightarrow$(i)] follows immediately by Lemmas \ref{Dircomp} and \ref{mn^2}. 
However, the proof of Theorem \ref{MainTheorem}-[(i)$\Rightarrow$(ii)] is 
difficult. Now we explain the idea of the proof. 
Since the ring of integers $\Or_K$ is a Dedekind domain, 
we obtain the following theorem. 

\begin{Thm}\label{result0}
Let $\Or_K$ be the ring of integers in 
an imaginary quadratic field $K$. 
Let $\afrak$ be an $\Or_K$-ideal, 
let $m$ be an arbitrary positive integer. 
Write 
\[
m=p_1\cdots p_r \cdot {q_1}^{e_1}\cdots{q_s}^{e_s}, 
\]
where 
\begin{itemize}
\item the $p_i$'s are primes with $(d_K/p_i)=0,1$, 
\item the $q_j$'s are distinct primes with $(d_K/q_j)=-1$, 
\item $r, s\geq 0$, $e_j >0$. 
\end{itemize}
Then, the followings are equivalent.
\begin{enumerate}[label=$(\mathrm{\roman*})$]
\item $N(\afrak)=m$.
\item All $e_j$'s are even, 
and 
there exist 
prime $\Or_K$-ideals $\pfrak_i$'s where $N(\pfrak_i)=p_i$ 
such that 
\[
\afrak=\pfrak_1\cdots\pfrak_r \cdot (q_1\Or_K)^{e_1/2}\cdots (q_s\Or_K)^{e_s/2}.
\]
\end{enumerate}
\end{Thm}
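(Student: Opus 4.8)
The plan is to exploit the unique factorization of ideals in the Dedekind domain $\Or_K$, translating the multiplicative structure of the norm into the factorization of the ideal $\afrak$. The implication (ii)$\Rightarrow$(i) is the routine direction: if $\afrak=\pfrak_1\cdots\pfrak_r\cdot(q_1\Or_K)^{e_1/2}\cdots(q_s\Or_K)^{e_s/2}$, then taking norms and using multiplicativity of the norm together with $N(\pfrak_i)=p_i$ and $N(q_j\Or_K)=q_j^2$ gives $N(\afrak)=p_1\cdots p_r\cdot q_1^{e_1}\cdots q_s^{e_s}=m$. So the substance is in (i)$\Rightarrow$(ii).

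For (i)$\Rightarrow$(ii), I would begin by writing the prime ideal factorization $\afrak=\prod_{\pfrak}\pfrak^{n_\pfrak}$ and then analyze how rational primes split in $K$. For a prime $p$ with $(d_K/p)=0$ (ramified), $p\Or_K=\pfrak^2$ with $N(\pfrak)=p$; for $(d_K/p)=1$ (split), $p\Or_K=\pfrak\bar\pfrak$ with $N(\pfrak)=N(\bar\pfrak)=p$ and $\pfrak\neq\bar\pfrak$; for $(d_K/p)=-1$ (inert), $p\Or_K$ is prime with $N(p\Or_K)=p^2$. Since $N(\afrak)=\prod_\pfrak N(\pfrak)^{n_\pfrak}=m$, I would compare the $q_j$-adic and $p_i$-adic valuations on both sides. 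Each inert prime $q_j$ contributes only through powers of the prime ideal $q_j\Or_K$, which has norm $q_j^2$; hence the exact power of $q_j$ dividing $m$ must be $2\cdot(\text{exponent of }q_j\Or_K\text{ in }\afrak)$, forcing $e_j$ even and pinning down that exponent as $e_j/2$. For the primes $p_i$ (which appear to the first power in $m$), the contribution must come from exactly one prime ideal above $p_i$ of norm $p_i$ appearing to the first power — ramified or split, either way a single prime ideal $\pfrak_i$ with $N(\pfrak_i)=p_i$ — and no other prime ideal above any other rational prime can contribute. Assembling these local statements gives the claimed global factorization of $\afrak$.

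The main obstacle, though largely bookkeeping, is handling the ramified and split cases for the $p_i$ uniformly: in the split case there is a genuine choice between $\pfrak_i$ and $\bar\pfrak_i$, and the statement only asserts existence of \emph{some} prime ideal of norm $p_i$, so I must be careful to phrase the conclusion as an existence claim and not overclaim uniqueness. I also need to rule out the possibility that a prime $p_i$ contributes via a product like $\pfrak_i\bar\pfrak_i=p_i\Or_K$ (norm $p_i^2$) combined with a cancellation elsewhere — but since $p_i$ divides $m$ to exactly the first power and all norms of prime ideals are positive prime powers, no such cancellation is possible, so the valuation argument closes this off cleanly. Finally I would note that every rational prime $\ell$ with a prime ideal above it appearing in $\afrak$ must divide $m$, so $\afrak$ involves only the $p_i$ and $q_j$, completing the determination of $\afrak$.
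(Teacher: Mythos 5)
Your proposal is correct and follows essentially the same route as the paper: unique factorization of $\afrak$ in the Dedekind domain $\Or_K$ together with multiplicativity of the norm, comparing valuations prime by prime to force the $e_j$ to be even and to identify prime ideals $\pfrak_i$ of norm $p_i$ (your write-up is in fact more explicit than the paper's, which simply asserts that the factorization of $\afrak$ has the required shape). One small caution: the theorem does not assume the $p_i$ are distinct, so a split or ramified prime may divide $m$ to a power higher than one; your valuation argument still applies verbatim --- the exponents of the prime ideals above such a prime just sum to its multiplicity among the $p_i$ --- so this is only a matter of phrasing, not a gap.
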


We prepare some terminologies for orders in quadratic fields. 
Let $\Or$ be an order of discriminant $D$ in a quadratic field. 
The index $f=|\Or_K/\Or|$ is called the {\it{conductor}} of the order. 
Then one can show that $D=f^2d_K$, namely $f=\sqrt{D/d_K}$ 
(see Cox \cite[\S7-A]{Cox}). 
A fractional $\Or$-ideal $\afrak$ is {\it{proper}} 
provided that 
\[
\Or = \{\beta\in K \mid \beta\afrak \subset \afrak\}.
\] 
A fractional $\Or$-ideal $\afrak$ is {\it{invertible}} 
if there exists a fractional ideal $\bfrak$ such that $\afrak\bfrak=\Or$. 
The notions of 
properness and invertibility coincide (see Cox \cite[Proposition 2.1]{Cox}).

By applying 
Proposition \ref{Ikf,Iof} to Theorem \ref{result0}, we get the following corollary.

\begin{Cor}\label{result1}
Let $\Or$ be the order of conductor $f$ in an imaginary quadratic field $K$, 
and let $D$ be the discriminant of $\Or$.
Let $\afrak$ be a proper $\Or$-ideal, 
and 
let $m$ be an arbitrary positive integer relatively prime to $f$. 
Write 
\[
m=p_1\cdots p_r \cdot {q_1}^{e_1}\cdots{q_s}^{e_s}, 
\]
where 
\begin{itemize}
\item the $p_i$'s are primes with $(D/p_i)=0,1$, 
\item the $q_j$'s are distinct primes with $(D/q_j)=-1$, 
\item $r, s\geq 0$, $e_j >0$. 
\end{itemize}
Then, the followings are equivalent.
\begin{enumerate}[label=$(\mathrm{\roman*})$]
\item $N(\afrak)=m$.
\item All $e_j$'s are even, and 
there exist prime $\Or$-ideals $\pfrak_i$'s where $N(\pfrak_i)=p_i$ 
such that 
\[
\afrak=\pfrak_1\cdots\pfrak_r (q_1\Or)^{e_1/2}\cdots (q_s\Or)^{e_s/2}.
\]
\end{enumerate}
\end{Cor}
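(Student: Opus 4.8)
The plan is to reduce the statement to the Dedekind‑domain case, i.e.\ to Theorem \ref{result0}, by passing through the norm‑preserving correspondence between proper $\Or$-ideals prime to $f$ and $\Or_K$-ideals prime to $f$ furnished by Proposition \ref{Ikf,Iof}. The direction [(ii)$\Rightarrow$(i)] is immediate: the norm is multiplicative on proper $\Or$-ideals, $N(\pfrak_i)=p_i$ by hypothesis, and $N(q_j\Or)=|\Or/q_j\Or|=q_j^2$ since $\Or$ is free of rank $2$ over $\Z$, so a product of the displayed shape has norm $p_1\cdots p_r\,q_1^{e_1}\cdots q_s^{e_s}=m$. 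So assume $N(\afrak)=m$; I must produce the factorization. First I would observe that $\afrak$ is automatically prime to $f$: from $N(\afrak)=m=|\Or/\afrak|$ one gets $m\Or\subseteq\afrak$, hence $\Or=m\Or+f\Or\subseteq\afrak+f\Or\subseteq\Or$ because $\gcd(m,f)=1$, so $\afrak+f\Or=\Or$.

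Next I would apply Proposition \ref{Ikf,Iof}: the assignment $\bfrak\mapsto\bfrak\Or_K$ is a norm‑preserving isomorphism from the group of proper fractional $\Or$-ideals prime to $f$ onto the group of fractional $\Or_K$-ideals prime to $f$, with inverse $\cfrak\mapsto\cfrak\cap\Or$, and it carries integral ideals to integral ideals and prime ideals to prime ideals. Hence $\afrak\Or_K$ is an $\Or_K$-ideal with $N(\afrak\Or_K)=N(\afrak)=m$. Since $m$ is prime to $f$, every prime $p$ dividing $m$ is prime to $f$, and then $D=f^2 d_K$ gives $(D/p)=(d_K/p)$; thus the factorization of $m$ written in the statement is exactly the one demanded by Theorem \ref{result0} for the $\Or_K$-ideal $\afrak\Or_K$. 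Applying that theorem, all $e_j$ are even and there are prime $\Or_K$-ideals $\mathfrak{P}_i$ with $N(\mathfrak{P}_i)=p_i$ such that
\[
\afrak\Or_K=\mathfrak{P}_1\cdots\mathfrak{P}_r\,(q_1\Or_K)^{e_1/2}\cdots(q_s\Or_K)^{e_s/2}.
\]

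Finally I would pull this identity back along the inverse isomorphism $\cfrak\mapsto\cfrak\cap\Or$. Each $\mathfrak{P}_i$ meets $\Z$ in $p_i\Z$ with $p_i\nmid f$, so $\mathfrak{P}_i$ is prime to $f$ and $\pfrak_i:=\mathfrak{P}_i\cap\Or$ is a prime $\Or$-ideal with $N(\pfrak_i)=N(\mathfrak{P}_i)=p_i$; likewise each $q_j\Or_K$ is prime to $f$ and equals the image of $q_j\Or$ under the forward map, so $(q_j\Or_K)\cap\Or=q_j\Or$. Since the correspondence is a group homomorphism,
\[
\afrak=(\afrak\Or_K)\cap\Or=\pfrak_1\cdots\pfrak_r\,(q_1\Or)^{e_1/2}\cdots(q_s\Or)^{e_s/2},
\]
which is (ii).

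I do not expect a substantive obstacle here, since the heavy lifting is contained in Theorem \ref{result0} and Proposition \ref{Ikf,Iof}; the only points needing care are the bookkeeping items above — checking that $\afrak$, the $\mathfrak{P}_i$, and the ideals $q_j\Or$ (hence $q_j\Or_K$) all lie in the domains of the isomorphisms of Proposition \ref{Ikf,Iof}, and the identity $(D/p)=(d_K/p)$ for $p\nmid f$ that lets Theorem \ref{result0} be quoted verbatim with the Legendre symbols phrased in terms of $D$ rather than $d_K$.
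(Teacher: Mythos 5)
Your proposal is correct and follows essentially the same route as the paper: the paper's (very terse) proof likewise observes that $D=f^2d_K$ gives $(D/p)=(d_K/p)$ for $p\nmid f$ and then invokes the norm-preserving correspondence of Proposition \ref{Ikf,Iof} together with Theorem \ref{result0}. Your write-up merely makes explicit the bookkeeping (that $\afrak$ is prime to $f$, and that the correspondence respects primes, products, and the ideals $q_j\Or$) that the paper leaves implicit.
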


We provide a proof of Theorem \ref{result0} and Corollary \ref{result1} in Section \ref{DecompOk}, 
although 
these seem to be well-known for experts. 
Furthermore, 
Corollary \ref{result1} is not enough for the main theorem, 
since we assumed that $m$ is relatively prime to the conductor $f$. 
Thus we need the following characterization which we prove in Section \ref{Decomp}.

\begin{Thm}\label{yosoCor}
Let $\Or$ be the order of conductor 
$f$ 
in an imaginary quadratic field $K$.  
Write 
$f={l_1}^{\lambda_1}\cdots{l_t}^{\lambda_t}$, 
where 
$t\geq 0$, $\lambda_k >0$, and 
the $l_k$'s are distinct primes. 
Let 
$D$ be the discriminant of $\Or$.
Let $\afrak$ be a proper $\Or$-ideal, 
and 
let $m$ be an arbitrary positive integer. 
Write 
\[
m=p_1\cdots p_r \cdot {q_1}^{e_1}\cdots{q_s}^{e_s} \cdot {l_1}^{h_1}\cdots{l_t}^{h_t}, 
\]
where 
\begin{itemize}
\item the $p_i$'s are primes relatively prime to $f$ with $(D/p_i)=0,1$, 
\item the $q_j$'s are distinct primes relatively prime to $f$ with $(D/q_j)=-1$, 
\item $r, s\geq 0$, $e_j >0$, $h_k \geq0$. 
\end{itemize}
The followings are equivalent.
\begin{enumerate}[label=$(\mathrm{\roman*})$]
\item $N(\afrak)=m$.
\item All $e_j$'s are even, and 
there exist 
\begin{itemize}
\item prime $\Or$-ideals $\pfrak_i$'s where $N(\pfrak_i)=p_i$, 
\item proper $\Or$-ideals $\cfrak_k$'s where $N(\cfrak_k)={l_k}^{h_k}$, 
\end{itemize}
such that 
\[
\afrak=\pfrak_1\cdots\pfrak_r \cdot (q_1\Or)^{e_1/2}\cdots (q_s\Or)^{e_s/2} 
\cdot \cfrak_1 \cdots \cfrak_t.
\]
\end{enumerate}
\end{Thm}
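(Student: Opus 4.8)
The plan is to reduce Theorem~\ref{yosoCor} to Corollary~\ref{result1} by splitting a proper $\Or$-ideal into the part prime to the conductor $f$ and the parts supported over the primes $l_k\mid f$. The heart of the argument is a factorization lemma: \emph{every proper $\Or$-ideal $\afrak$ can be written as $\afrak=\bfrak\cdot\cfrak_1\cdots\cfrak_t$, where $\bfrak$ is a proper $\Or$-ideal prime to $f$ and each $\cfrak_k$ is a proper $\Or$-ideal with $N(\cfrak_k)$ a power of $l_k$; when $\afrak$ is integral, the factors may be taken integral.} The idea is that a proper (equivalently, invertible) ideal is locally principal, hence determined by its localizations $\afrak_{\pfrak}$ at the finitely many primes $\pfrak$ of $\Or$ containing it; putting the primes lying over $l_k$ into $\cfrak_k$ and the others into $\bfrak$ gives the decomposition, and one checks the equality $\afrak=\bfrak\cfrak_1\cdots\cfrak_t$ and the integrality of each factor locally. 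Then $\Or/\cfrak_k$ is an $l_k$-group, so $N(\cfrak_k)$ is a power of $l_k$, while no prime containing $\bfrak$ lies over a divisor of $f$, so $\bfrak+f\Or=\Or$, i.e.\ $\bfrak$ is prime to $f$. This is where the ``relative conductor'' of the orders between $\Or$ and $\Or_K$ (Section~\ref{Relative}) and the decomposition results of Section~\ref{Decomp} do the work, since at $l_k\mid f$ the order $\Or$ is no longer a Dedekind domain.

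Granting the lemma, the two implications follow. For (ii)$\Rightarrow$(i): the norm is multiplicative on proper ideals, $N(q_j\Or)=q_j^2$, $N(\pfrak_i)=p_i$, and $N(\cfrak_k)=l_k^{h_k}$, so the displayed factorization gives $N(\afrak)=p_1\cdots p_r\cdot q_1^{e_1}\cdots q_s^{e_s}\cdot l_1^{h_1}\cdots l_t^{h_t}=m$. For (i)$\Rightarrow$(ii): write $m=m'\cdot l_1^{h_1}\cdots l_t^{h_t}$ with $m'=p_1\cdots p_r\cdot q_1^{e_1}\cdots q_s^{e_s}$ prime to $f$, and decompose $\afrak=\bfrak\cfrak_1\cdots\cfrak_t$ by the lemma. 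Comparing $l_k$-adic valuations in $N(\bfrak)\cdot N(\cfrak_1)\cdots N(\cfrak_t)=m$ — using that $N(\bfrak)$ is prime to $f$ and each $N(\cfrak_k)$ is a power of $l_k$ — forces $N(\cfrak_k)=l_k^{h_k}$ for every $k$, hence $N(\bfrak)=m'$. Now Corollary~\ref{result1}, applied to the proper $\Or$-ideal $\bfrak$ whose norm $m'$ is prime to $f$, shows that all $e_j$ are even and that $\bfrak=\pfrak_1\cdots\pfrak_r(q_1\Or)^{e_1/2}\cdots(q_s\Or)^{e_s/2}$ with prime $\Or$-ideals $\pfrak_i$ of norm $p_i$. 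Substituting this into $\afrak=\bfrak\cfrak_1\cdots\cfrak_t$ yields precisely the factorization demanded in (ii).

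The only real obstacle is the factorization lemma. Since $\Or$ fails to be regular at the primes $l_k$, an invertible ideal supported over such a prime need not be a product of invertible prime ideals — which is exactly why (ii) must speak of proper ideals $\cfrak_k$ of norm $l_k^{h_k}$ rather than of products of primes. I would make the lemma precise by decomposing the group of invertible fractional $\Or$-ideals according to the rational prime underlying the support (equivalently, by working in the semilocalizations of $\Or$ at each rational prime), verifying that the pieces recombine to $\afrak$, and reading off each norm from the order of the finite quotient $\Or/\cfrak_k$. Everything else — multiplicativity of the norm on invertible ideals, $N(q\Or)=q^2$, invertibility of prime ideals not containing $f$, and the elementary bookkeeping with prime factorizations — is routine and short.
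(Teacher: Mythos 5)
Your proposal is correct, and its overall architecture matches the paper's: the paper also isolates exactly your factorization lemma (it is Proposition \ref{yoso}: every proper $\Or$-ideal of norm $n\cdot l_1^{h_1}\cdots l_t^{h_t}$ factors as $\bfrak\cdot\cfrak_1\cdots\cfrak_t$ with $N(\bfrak)=n$ prime to $f$ and $N(\cfrak_k)=l_k^{h_k}$) and then deduces Theorem \ref{yosoCor} from that lemma together with Corollary \ref{result1}, just as you do. Where you genuinely diverge is in how the lemma is proved. You propose the local--global route: a proper ideal is invertible, hence locally principal, so the group of invertible fractional $\Or$-ideals splits as a direct sum over the rational primes supporting them, and one reads off each $\cfrak_k$ from the localizations at primes above $l_k$. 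This is clean and conceptually standard, but the one step you should not wave at is surjectivity of the map from invertible fractional ideals to the direct sum of local principal fractional ideals (i.e.\ that a prescribed family of local pieces recombines into a genuine invertible ideal); since $\Or$ is not Dedekind at the $l_k$ this requires the usual idelic/sheaf-theoretic argument for one-dimensional Noetherian domains, which is true but is precisely the content you would need to write out. The paper instead argues by induction on $t$, the number of prime divisors of the conductor, climbing the chain $\Or=\Or_t\subset\Or_{t-1}\subset\cdots\subset\Or_1\subset\Or_K$: it pushes $\afrak$ up to $\Or_{t-1}$ (extension preserves properness and norm, Propositions \ref{proper} and \ref{norm}), applies the inductive hypothesis and Corollary \ref{result1} there, and pulls the factors back by intersection, using the relative-conductor toolkit of Section \ref{Relative} (Propositions \ref{sonaranorm}, \ref{sonaraissyu}, \ref{sonaraproper}, \ref{sonaraseki}) to control norms, properness, and multiplicativity of contraction; the last factor $\cfrak_t$ is defined as a quotient ideal and checked to be integral. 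The paper's route is longer but entirely elementary (norms, extension, contraction) and avoids localization theory; yours is shorter modulo the one structural fact about invertible ideals of non-maximal orders. Both are valid proofs of the same key lemma, and the remainder of your argument (norm multiplicativity for (ii)$\Rightarrow$(i), valuation bookkeeping and Corollary \ref{result1} for (i)$\Rightarrow$(ii)) coincides with the paper's.
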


Note that 
to prove this theorem, we define the relative conductors of orders in Section \ref{Relative}. 

Finally, 
using the fact that 
the form class group $C(D)$ is isomorphic to the ideal class group $C(\Or)$ (see Proposition \ref{C(D),C(O)}), we 
derive Theorem \ref{MainTheorem}-[(i)$\Rightarrow$(ii)] in Section \ref{ProofoftheM}.


\section{Orders in imaginary quadratic fields}\label{Orders}

We prepare some propositions for orders. 
\begin{Prop}\label{normseki}
Let $\Or$ be an order 
in an imaginary quadratic field. Then
\begin{enumerate}[label=$(\mathrm{\roman*})$]
\item $N(\afrak\bfrak)=N(\afrak)N(\bfrak)$ 
for proper $\Or$-ideals $\afrak$ and $\bfrak$.
\item $\afrak\overline{\afrak}=N(\afrak)\Or$ for proper $\Or$-ideal $\afrak$.
\end{enumerate}
\end{Prop}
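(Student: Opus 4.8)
The plan is to establish part (ii) first by a direct computation in the normal form for proper ideals, and then to deduce part (i) from it.

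For (ii): recall (see \cite[\S 7]{Cox}) that every proper fractional $\Or$-ideal is homothetic to one of the shape $\afrak_0 = a\Z + \tau\Z$ with $\tau = (-b+\sqrt{D})/2$, where $a,b,c\in\Z$, $a>0$, $\gcd(a,b,c)=1$, and $D = b^2-4ac$; note that $\tau$ is a root of $X^2+bX+ac$, so $\Z+\Z\tau$ is the order of discriminant $D$, i.e.\ $\Z+\Z\tau=\Or$, and that $N(\afrak_0)=[\Or:\afrak_0]=a$. Now $\tau+\overline{\tau}=-b$ and $\tau\overline{\tau}=ac$, so the $\Z$-module $\afrak_0\overline{\afrak_0}$ is generated by $a^2$, $a\tau$, $a\overline{\tau}$, $ac$; replacing $a\overline{\tau}$ by $a\tau+a\overline{\tau}=-ab$ and using $\gcd(a,b,c)=1$,
\[
\afrak_0\overline{\afrak_0}=a\langle a,b,c,\tau\rangle_{\Z}=a(\Z+\Z\tau)=a\Or=N(\afrak_0)\Or .
\]
For a general proper ideal $\afrak=\lambda\afrak_0$ with $\lambda\in K^{\times}$, one has $\overline{\afrak}=\overline{\lambda}\,\overline{\afrak_0}$, hence $\afrak\overline{\afrak}=\lambda\overline{\lambda}\,a\Or = N_{K/\Q}(\lambda)\,a\,\Or$; since also $N(\afrak)=N_{K/\Q}(\lambda)N(\afrak_0)=N_{K/\Q}(\lambda)\,a$ (which reduces, writing $\lambda$ as a quotient of elements of $\Or$, to the fact that $[\Or:\mu\Or]=|N_{K/\Q}(\mu)|$ for $\mu\in\Or$), this gives $\afrak\overline{\afrak}=N(\afrak)\Or$.

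For (i): complex conjugation fixes $\Z$ and stabilizes $\Or_K$, hence fixes $\Or=\Z+f\Or_K$; therefore $\overline{\afrak}$, $\overline{\bfrak}$ are again proper $\Or$-ideals and $\overline{\afrak\bfrak}=\overline{\afrak}\,\overline{\bfrak}$. Moreover $\afrak\bfrak$ is proper, since the invertible fractional ideals form a group and properness coincides with invertibility (\cite[Proposition 2.1]{Cox}). Applying (ii) to $\afrak$, $\bfrak$ and $\afrak\bfrak$, and using commutativity of ideal multiplication,
\[
N(\afrak\bfrak)\Or=(\afrak\bfrak)\overline{(\afrak\bfrak)}=(\afrak\overline{\afrak})(\bfrak\overline{\bfrak})=N(\afrak)N(\bfrak)\Or .
\]
Finally, if $r,s\in\Q_{>0}$ and $r\Or=s\Or$ then $r/s\in\Or\cap\Q=\Z$ and likewise $s/r\in\Z$, so $r=s$; thus $N(\afrak\bfrak)=N(\afrak)N(\bfrak)$.

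The routine parts are formal; I expect the only real care to be needed in the explicit $\Z$-module computation of $\afrak_0\overline{\afrak_0}$ — specifically, checking that the four generators collapse to $a\Or$, which is precisely where primitivity $\gcd(a,b,c)=1$ is used — and in fixing the normalization of the ideal norm and its behaviour under homothety.
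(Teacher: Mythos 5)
Your proof is correct. The paper gives no argument of its own here---it simply cites Cox, Lemma 7.14---and your route (reduce a proper ideal up to homothety to the normal form $a\Z+\Z(-b+\sqrt{D})/2$, compute $\afrak_0\overline{\afrak_0}=a\Or$ using $\gcd(a,b,c)=1$, then deduce (i) from (ii) via invertibility of proper ideals and multiplicativity of conjugation) is essentially the standard proof given in that reference, so there is nothing to add.
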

\begin{proof}
See Cox \cite[Lemma 7.14]{Cox}.
\end{proof}

For an order $\Or$ in a quadratic field $K$, 
we denote by $I(\Or)$ and $P(\Or)$ the group of proper fractional $\Or$-ideals and the subgroup of principal fractional ideals, respectively. 
The quotient $C(\Or)=I(\Or)/P(\Or)$ is the {\it{ideal class group}} of the order $\Or$. 
Let $m$ be a positive integer. 
We say that 
a proper $\Or$-ideal $\afrak$ is 
{\it{prime to}} $m$ provided that $\afrak+m\Or = \Or$. 
An $\Or$-ideal $\afrak$ is prime to $m$ 
if and only if 
its norm $N(\afrak)$ is relatively prime to $m$ (see Cox \cite[\S7-C]{Cox}). 

Let $\Or$ be an order of conductor $f$ in an imaginary quadratic field $K$. 
We denote by $I(\Or,f)$ and $I_K(f)$ 
the subgroup of $I(\Or)$ generated by $\Or$-ideals prime to $f$ and 
the subgroup of $I_K$ generated by $\Or_K$-ideals prime to $f$, respectively. 
\begin{Prop}\label{Ikf,Iof}
Let $\Or$ be the order of conductor $f$ in an imaginary quadratic field $K$. Then 
the map $\afrak\mapsto\afrak\cap\Or$ induces 
an isomorphism $I_K(f)\simeq I(\Or,f)$, 
and the inverse of this map is given by $\afrak\mapsto\afrak\Or_K$. 
\end{Prop}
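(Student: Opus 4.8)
The plan is to realize the stated correspondence on \emph{integral} ideals first and then extend it multiplicatively, the one structural fact doing all the work being that the conductor ideal $f\Or_K$ lies inside $\Or$ (immediate from $f=|\Or_K/\Or|$, since $f$ annihilates $\Or_K/\Or$). Since an $\Or$-ideal is prime to $f$ exactly when its norm is, and $N$ is multiplicative by Proposition \ref{normseki} (the analogues for $\Or_K$ being classical), the integral ideals prime to $f$ on either side form monoids under multiplication; so it is enough to produce two mutually inverse, multiplicative maps between the integral $\Or_K$-ideals prime to $f$ and the proper integral $\Or$-ideals prime to $f$, which then extend uniquely to homomorphisms $I_K(f)\to I(\Or,f)$ and back, automatically inverse to one another. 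I would take $\Phi(\afrak)=\afrak\cap\Or$ and $\Psi(\bfrak)=\bfrak\Or_K$.

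First I would check these land where claimed. For an integral $\Or_K$-ideal $\afrak$ prime to $f$, from $\afrak+f\Or_K=\Or_K$ and $f\Or_K\subseteq\Or$ one gets $\afrak+\Or=\Or_K$, so $\Or/(\afrak\cap\Or)\cong\Or_K/\afrak$ by the isomorphism theorem; hence $\afrak\cap\Or$ is a nonzero $\Or$-ideal with $N(\afrak\cap\Or)=N(\afrak)$ prime to $f$, and therefore prime to $f$ — in particular proper. For a proper integral $\Or$-ideal $\bfrak$ prime to $f$, extending $\bfrak+f\Or=\Or$ to $\Or_K$ gives $\bfrak\Or_K+f\Or_K=\Or_K$, so $\bfrak\Or_K$ is an integral $\Or_K$-ideal prime to $f$.

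The heart of the argument is that $\Phi$ and $\Psi$ are mutually inverse. Given a proper $\Or$-ideal $\bfrak$ prime to $f$, write $1=\alpha+\beta$ with $\alpha\in\bfrak$ and $\beta\in f\Or$; for $x\in\bfrak\Or_K\cap\Or$ we have $x\alpha\in\bfrak$ (since $x\in\Or$) and $x\beta\in(\bfrak\Or_K)(f\Or_K)=\bfrak\cdot f\Or_K\subseteq\bfrak\Or=\bfrak$, whence $x=x\alpha+x\beta\in\bfrak$; the reverse inclusion is trivial, so $\bfrak\Or_K\cap\Or=\bfrak$. Dually, given an integral $\Or_K$-ideal $\afrak$ prime to $f$, write $1=\alpha+\beta$ with $\alpha\in\afrak$ and $\beta\in f\Or_K$; then $\alpha=1-\beta\in\Or$, so $\alpha\in\afrak\cap\Or$, and for $x\in\afrak$ we get $x\alpha\in\Or_K(\afrak\cap\Or)=(\afrak\cap\Or)\Or_K$ while $x\beta\in\afrak\cdot f\Or_K=f\afrak\subseteq f\Or_K\subseteq\Or$ and $x\beta\in\afrak$, so $x\beta\in\afrak\cap\Or$; hence $x\in(\afrak\cap\Or)\Or_K$, giving $(\afrak\cap\Or)\Or_K=\afrak$.

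Finally $\Psi$ is multiplicative, since $(\afrak\bfrak)\Or_K=\afrak\bfrak\Or_K=(\afrak\Or_K)(\bfrak\Or_K)$ using commutativity of ideal products and $\Or_K\Or_K=\Or_K$, so its inverse $\Phi$ is multiplicative too; extending both to the fractional-ideal groups yields the inverse isomorphisms $I_K(f)\cong I(\Or,f)$ as stated. I expect the main obstacle to be the third paragraph: one must feed the coprimality relation and the containment $f\Or_K\subseteq\Or$ into the products in just the right order to land $x\beta$ back inside $\bfrak$ (resp.\ inside $\afrak\cap\Or$). A minor point to keep honest is that the formula ``$\afrak\cap\Or$'' describes the isomorphism literally only on integral ideals — for a general fractional ideal in $I_K(f)$ it is the multiplicative extension — and that one does need to verify, as above, that $\afrak\cap\Or$ is proper so that it actually lies in $I(\Or,f)$.
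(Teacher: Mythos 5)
Your argument is correct, but note that the paper does not actually prove this proposition---it simply cites Cox, Proposition 7.20---so any honest proof is ``a different route'' from the paper's. What you have written is essentially the standard argument from Cox: establish the bijection $\afrak\mapsto\afrak\cap\Or$, $\bfrak\mapsto\bfrak\Or_K$ on \emph{integral} ideals prime to $f$ via the partition-of-unity trick $1=\alpha+\beta$ together with the containment $f\Or_K\subseteq\Or$, check multiplicativity of the extension map, and then extend to the groups of fractional ideals. All the ideal-theoretic manipulations in your third paragraph check out (in particular $x\beta\in\bfrak\cdot f\Or_K\subseteq\bfrak\Or=\bfrak$ and $x\beta\in f\afrak\subseteq f\Or_K\subseteq\Or$ are exactly the right uses of $f\Or_K\subseteq\Or$), and deducing multiplicativity of $\Phi$ from that of $\Psi$ once they are known to be mutually inverse is clean. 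The one place where you lean on an unproved ingredient is the step ``prime to $f$ --- in particular proper'': that an $\Or$-ideal prime to the conductor is automatically proper is a genuine lemma (Cox, Lemma 7.18), not a formal consequence of the norm criterion you quote, and it is needed both for $\afrak\cap\Or$ to land in $I(\Or,f)\subseteq I(\Or)$ and, implicitly, for the generating set of $I(\Or,f)$ to be what you treat it as. It does have a two-line proof in the same spirit as the rest of your argument: if $\beta\bfrak\subseteq\bfrak$ then $\beta\in\Or_K$ since $\bfrak$ is a lattice, so $\beta f\Or\subseteq f\Or_K\subseteq\Or$, and then $\beta\Or=\beta\bfrak+\beta f\Or\subseteq\bfrak+\Or=\Or$ gives $\beta\in\Or$. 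With that line supplied, your proof is complete and self-contained, which is arguably an improvement over the paper's bare citation.
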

\begin{proof}
See Cox \cite[Proposition 7.20]{Cox}.
\end{proof}

We relate the form class group $C(D)$ to the ideal class group $C(\Or)$. 

\begin{Prop}\label{existO}
Any negative integer $D\equiv0, 1\bmod4$ is the discriminant of an order $\Or$ in an imaginary quadratic field $K$. Furthermore, $D$ determines $\Or$ uniquely, and $K=\Q(\sqrt{D})$. 
\end{Prop}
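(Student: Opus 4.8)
The plan is to construct the order directly inside $K=\Q(\sqrt{D})$ and to invoke the standard classification of orders in a quadratic field. Since $D$ is a negative integer it is not a perfect square, so $K=\Q(\sqrt{D})$ is a genuine imaginary quadratic field, and by construction $K=\Q(\sqrt{D})$; this already disposes of the last assertion. I would then recall (Cox \cite[\S7-A]{Cox}) that the orders in $K$ are precisely the rings $\Or_f=\Z+f\Or_K$ for integers $f\geq1$, that $\Or_f$ has conductor $f$, and that its discriminant equals $f^2 d_K$; in particular distinct values of $f$ produce distinct discriminants. Granting this, the whole statement collapses to one claim: that $d_K$ divides $D$ and that $D/d_K$ is a perfect square. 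Indeed, if so, then $\Or=\Or_f$ with $f=\sqrt{D/d_K}$ is an order of discriminant $D$, and it is the only such order because $f$ is recovered from $D$ as $\sqrt{D/d_K}$.

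So the work is to show $D/d_K$ is a positive square. I would write $D=n^2 D_0$ with $D_0$ the squarefree part of $D$ (so $D_0<0$, $n\geq1$); then $K=\Q(\sqrt{D_0})$, and $d_K=D_0$ if $D_0\equiv1\pmod 4$ while $d_K=4D_0$ if $D_0\equiv2,3\pmod 4$. In the first case $D/d_K=n^2$ and we are finished. In the second case I claim $n$ must be even: were $n$ odd, then $D=n^2D_0\equiv D_0\equiv2,3\pmod 4$, contradicting the hypothesis $D\equiv0,1\pmod 4$; writing $n=2n'$ gives $D/d_K=4{n'}^2 D_0/(4D_0)={n'}^2$, once again a square. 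This establishes the claim, and with it the proposition.

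The argument is essentially bookkeeping built on the cited structure theory of orders; I expect the only delicate point — and the only place the hypothesis $D\equiv0,1\pmod 4$ is actually used — to be the parity step forcing $n$ to be even when $D_0\equiv2,3\pmod 4$, which is precisely what upgrades ``$D$ is (up to the fundamental discriminant) a square times $D_0$'' to ``$D/d_K$ is a square.'' No genuine obstacle is anticipated beyond stating that case split cleanly.
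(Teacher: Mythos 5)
Your proof is correct, and it is essentially the standard argument: the paper itself offers no proof beyond the citation ``See Cox \cite[\S7-A]{Cox}'', and what you have written out (the classification of orders as $\Z+f\Or_K$ with discriminant $f^2d_K$, plus the parity check showing $D/d_K$ is a perfect square) is exactly the content of that reference. The one step worth flagging as genuinely load-bearing --- that $D\equiv 0,1\bmod 4$ forces $n$ even when the squarefree part $D_0\equiv 2,3\bmod 4$ --- you have identified and handled correctly.
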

\begin{proof}
See Cox \cite[\S7-A]{Cox}.
\end{proof}

\begin{Prop}\label{C(D),C(O)}
Let $\Or$ be 
an 
order of discriminant $D$ in an imaginary quadratic field. 
Then
\begin{enumerate}[label=$({\rm{\roman*}})$]
\item If $f(x,y)=ax^2+bxy+cy^2$ is a form of discriminant $D$, 
then $\langle a,(-b+\sqrt{D})/2 \rangle_\Z$ is a proper ideal of $\Or$.
\item The map sending $f(x,y)$ to $\langle a,(-b+\sqrt{D})/2 \rangle_\Z$ induces an isomorphism between $C(D)$ and $C(\Or)$.
\item A positive integer $m$ is represented by a form $f(x,y)$ if and only if $m$ is the norm $N(\mathfrak{a})$ of some ideal $\mathfrak{a}$ in the corresponding ideal class in $C(\Or)$ 
.
\end{enumerate} 
Here, we set 
$\langle a,(-b+\sqrt{D})/2 \rangle_\Z$ 
$=$ 
$\{ma+n(-b+\sqrt{D})/2 \mid m,n\in\Z \}$.
\end{Prop}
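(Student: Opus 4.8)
These three assertions constitute the form--ideal dictionary for orders, and I would prove them in three steps, using Proposition~\ref{existO} (existence and uniqueness of the order of a given discriminant) and Proposition~\ref{normseki} (multiplicativity of the ideal norm and $\afrak\overline{\afrak}=N(\afrak)\Or$). In substance this is Cox~\cite[Theorem~7.7]{Cox}, whose argument I follow.

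For part~(i), set $\mu:=(-b+\sqrt{D})/2$. Since $D<0$ is not a perfect square, $a$ and $\mu$ are $\Q$-linearly independent, so $\afrak:=\langle a,\mu\rangle_\Z$ is a full lattice in $K$; moreover $\mu$ is a root of $X^{2}+bX+ac$, hence an algebraic integer. Writing $\tau:=\mu/a$, a root of $aX^{2}+bX+c$, one has $\afrak=a\langle1,\tau\rangle_\Z$, so it suffices to determine $R:=\{\beta\in K\mid\beta\langle1,\tau\rangle_\Z\subseteq\langle1,\tau\rangle_\Z\}$. For $\beta=u+v\tau$ with $u,v\in\Q$ the condition $\beta\cdot1\in\langle1,\tau\rangle_\Z$ forces $u,v\in\Z$; then, using $a\tau^{2}=-b\tau-c$, the condition $\beta\tau\in\langle1,\tau\rangle_\Z$ reduces to $vb/a,\,vc/a\in\Z$, which together with $v\in\Z$ and the primitivity $\gcd(a,b,c)=1$ is equivalent to $a\mid v$. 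Hence $R=\langle1,a\tau\rangle_\Z=\Z+\Z\mu$, an order of discriminant $D$, so $R=\Or$ by the uniqueness in Proposition~\ref{existO}; thus $\afrak$ is a proper $\Or$-ideal (and incidentally $\afrak\subseteq\Or$).

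For part~(ii), I would write down the inverse map explicitly: to a proper fractional $\Or$-ideal $\afrak$ with $\Z$-basis $[\alpha,\beta]$ ordered so that $(\alpha\overline{\beta}-\overline{\alpha}\beta)/\sqrt{D}>0$, attach the form $N(\alpha x+\beta y)/N(\afrak)$. Using properness (for primitivity) and Proposition~\ref{normseki}, one checks that this is a primitive positive definite form of discriminant $D$, that changing the oriented basis or rescaling $\afrak$ by an element of $K^{\times}$ alters it only up to proper equivalence, and that the two maps are mutually inverse (for $\afrak=\langle a,\mu\rangle_\Z$ the oriented basis is $(a,-\mu)$ and the form is $N(ax-\mu y)/a=ax^{2}+bxy+cy^{2}$); this gives the bijection $C(D)\leftrightarrow C(\Or)$. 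The remaining point---that the bijection is a group isomorphism---is exactly the statement that Dirichlet composition of forms matches multiplication of ideal classes, which one proves by comparing the two definitions directly, or cites from \cite[Theorem~7.7]{Cox} and its maximal-order precursor. I expect this matching to be the main obstacle; everything else is routine bookkeeping.

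For part~(iii), the engine is the identity $N(ax-\overline{\mu}\,y)=a\,f(x,y)$ for $x,y\in\Z$, where $\overline{\mu}=(-b-\sqrt{D})/2$, since the left side expands to $a^{2}x^{2}+abxy+acy^{2}$. Put $\afrak_{0}:=\langle a,\mu\rangle_\Z$, which corresponds to $[f]$ by part~(ii); then $\overline{\afrak_{0}}=\langle a,\overline{\mu}\rangle_\Z$ has $N(\overline{\afrak_{0}})=a$, and $\afrak_{0}\overline{\afrak_{0}}=(a)$ is principal (Proposition~\ref{normseki}), so $[\overline{\afrak_{0}}]=[\afrak_{0}]^{-1}$. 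If $m=f(x_{0},y_{0})$, set $\gamma:=ax_{0}-\overline{\mu}y_{0}\in\overline{\afrak_{0}}$; then $N(\gamma)=am$, so $\bfrak:=(\gamma)\,\overline{\afrak_{0}}^{-1}$ is an integral ideal with $N(\bfrak)=m$ lying in the class $[\overline{\afrak_{0}}]^{-1}=[\afrak_{0}]$, i.e.\ the one corresponding to $[f]$. Conversely, if $N(\afrak)=m$ for an integral ideal $\afrak$ in the class of $\afrak_{0}$, then $\overline{\afrak_{0}}\,\afrak=(\delta)$ is principal with $N(\delta)=am$, and $\delta\in\overline{\afrak_{0}}$ since $\afrak\subseteq\Or$; writing $\delta=ax-\overline{\mu}y$ with $x,y\in\Z$ and comparing norms yields $m=f(x,y)$. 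This settles (iii), and the proposition follows.
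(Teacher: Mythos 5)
Your proof is correct and is essentially a faithful reconstruction of the argument of Cox, Theorem 7.7, which is exactly what the paper does: its entire proof of this proposition is the citation ``See Cox [Theorem 7.7]''. The computations you supply (the multiplier ring in (i), the oriented-basis inverse map in (ii), and the identity $N(ax-\overline{\mu}y)=af(x,y)$ driving (iii)) all check out, so there is nothing to compare beyond noting that you have expanded the reference the paper leaves implicit.
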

\begin{proof}
See Cox \cite[Theorem 7.7]{Cox}.
\end{proof}


\section{Decomposition of $\Or_K$-ideals}\label{DecompOk}
In this section, we prove Theorem \ref{result0} and Corollary \ref{result1}, 
although it may be well-known for experts. 
\begin{proof}[Proof of Theorem \rm{\ref{result0}}]
$[(\mathrm{i})\Rightarrow(\mathrm{ii})]$ 

\noindent
Since $\Or_K$ is a Dedekind domain, 
$\afrak$ can be written as a product 
\[
\afrak=
\pfrak_1\cdots\pfrak_{r} \cdot (q_1\Or_K)^{f_1}\cdots (q_{s}\Or_K)^{f_{s}}
\]
of prime ideals, 
where 
$N(\pfrak_i)=p_i$, $f_i>0$, 
and the $p_i$'s split or ramify 
and the $q_j$'s are inert. 

Since the norm of $\Or_K$-ideals preserves multiplication, we have
\begin{align*}
p_1\cdots p_r \cdot {q_1}^{e_1}\cdots{q_s}^{e_s}
&=m \\
&=N(\afrak) \\
&=N(\pfrak_1)\cdots N(\pfrak_{r})\cdot N(q_1\Or_K)^{f_1}\cdots N(q_{s}\Or_K)^{f_{s}} \\
&=p_1\cdots p_r \cdot {q_1}^{2f_1}\cdots{q_{s}}^{2f_{s}}. 
\end{align*}
Thus 
we see 
$e_j=2f_j$. 
Therefore all $e_j$'s are even. 
This completes the proof.

\noindent
$[(\mathrm{ii})\Rightarrow(\mathrm{i})]$ 

\noindent
This follows immediately, since norm of $\Or_K$-ideals preserves multiplication. 
\end{proof}

\begin{proof}[Proof of Corollary \rm{\ref{result1}}]
Since $D=f^2 d_K$, we see that 
$(D/p_i)
=(d_K/p_i)$, 
$(D/q_j)
=(d_K/q_j)$. 
Thus 
Proposition \ref{Ikf,Iof} and 
Theorem \ref{result0} 
imply the corollary.
\end{proof}

%

\section{Relative conductors of orders}\label{Relative}
We define the relative conductors of orders in this section, 
in order to prove Theorem \ref{yosoCor}, 
which is a characterization of decomposition of proper $\Or$-ideals not relatively prime to the conductor $f$. 
Let $\Or$, $\Or'$ be orders 
in an imaginary quadratic field with $\Or \subset\Or'$. 
We say that the index $r=|\Or'/\Or|$ is the 
{\it{relative conductor}} of $\Or$ in $\Or'$. 

\begin{Prop}\label{seki}
Let $\Or$, $\Or'$ be orders in an imaginary quadratic field with $\Or\subset\Or'$. 
Let $\afrak,\bfrak$ be 
$\Or$-ideals. Then
\[
(\afrak\bfrak)\Or' = (\afrak\Or') (\bfrak\Or').
\]
\end{Prop}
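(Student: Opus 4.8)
The plan is to prove the identity $(\afrak\bfrak)\Or' = (\afrak\Or')(\bfrak\Or')$ by a direct computation with generators, exploiting that ideals in these orders are finitely generated $\Z$-modules (in fact, since we are in an imaginary quadratic field, every ideal is generated as an $\Or$-module by two elements, and in particular as a $\Z$-module by two elements). First I would recall that for an $\Or$-ideal $\afrak$, the product $\afrak\Or'$ is by definition the $\Or'$-module generated by $\afrak$, i.e. the set of finite sums $\sum_i \alpha_i \beta_i$ with $\alpha_i \in \afrak$, $\beta_i \in \Or'$. Similarly $(\afrak\bfrak)\Or'$ is the $\Or'$-module generated by the products $\alpha\gamma$ with $\alpha\in\afrak$, $\gamma\in\bfrak$, and $(\afrak\Or')(\bfrak\Or')$ is the $\Or'$-module generated by products $(\alpha\beta)(\gamma\beta')$ with $\alpha\in\afrak$, $\gamma\in\bfrak$, $\beta,\beta'\in\Or'$.

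The inclusion $(\afrak\bfrak)\Or' \subseteq (\afrak\Or')(\bfrak\Or')$ is immediate: any generator $\alpha\gamma$ of $\afrak\bfrak$ equals $(\alpha\cdot\id)(\gamma\cdot\id)$, which lies in $(\afrak\Or')(\bfrak\Or')$ since $\id\in\Or'$; then apply $\Or'$-linearity. Conversely, a typical generator of $(\afrak\Or')(\bfrak\Or')$ has the form $(\alpha\beta)(\gamma\beta') = (\alpha\gamma)(\beta\beta')$ with $\alpha\in\afrak$, $\gamma\in\bfrak$, $\beta,\beta'\in\Or'$; since $\Or'$ is a ring, $\beta\beta'\in\Or'$, so this is $\Or'$ times the element $\alpha\gamma\in\afrak\bfrak$, hence lies in $(\afrak\bfrak)\Or'$. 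Taking $\Or'$-linear combinations preserves both memberships, giving the reverse inclusion. This settles the equality.

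The only genuine subtlety — and the step I expect to require the most care in the write-up — is being precise about what "$\Or'$-ideal generated by a set $S$" means when $\afrak$, $\bfrak$ are a priori only $\Or$-ideals and possibly fractional: one should note that $\afrak\bfrak$ is well-defined as a fractional $\Or$-ideal (a finitely generated $\Or$-submodule of $K$), that multiplying by $\Or'\supseteq\Or$ keeps everything inside $K$ and finitely generated, and that the resulting $\afrak\bfrak\Or'$, $\afrak\Or'$, $\bfrak\Or'$ are all genuine fractional $\Or'$-ideals. Once the notation $\cdot\,\Or'$ is unambiguously "extension of scalars," the identity $(\afrak\bfrak)\otimes_\Or \Or' \cong (\afrak\otimes_\Or\Or')\otimes_{\Or'}(\bfrak\otimes_\Or\Or')$ is just the standard fact that extension of scalars commutes with tensor product, realized concretely inside $K$ as above. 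I would present the elementary generator argument rather than invoking tensor products, since it is short and self-contained, but mention that conceptually it is this base-change compatibility.
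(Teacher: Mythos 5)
Your generator computation is correct and is exactly the content behind the paper's one-line proof, which simply asserts that the identity ``follows immediately by definition'': both sides reduce to the $\Or'$-module spanned by products $\alpha\gamma\beta$ with $\alpha\in\afrak$, $\gamma\in\bfrak$, $\beta\in\Or'$, using $1\in\Or'$ for one inclusion and closure of $\Or'$ under multiplication for the other. Your elaboration (and the optional base-change remark, which you rightly keep non-load-bearing) fills in the details the paper omits, so the two arguments are essentially the same.
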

\begin{proof}
This follows immediately by definition of orders. 
\end{proof}

\begin{Prop}\label{proper}
Let $\Or$, $\Or'$ be orders in an imaginary quadratic field with $\Or\subset\Or'$. 
If $\afrak$ is a proper $\Or$-ideal, then 
$\afrak\Or'$ is also 
a proper $\Or'$-ideal.
\end{Prop}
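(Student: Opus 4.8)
The plan is to deduce properness of $\afrak\Or'$ from invertibility, using the fact (Cox \cite[Proposition 2.1]{Cox}, recalled above) that for an order in an imaginary quadratic field a fractional ideal is proper if and only if it is invertible, together with Proposition \ref{seki}.

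First I would check that $\afrak\Or'$ is indeed a fractional $\Or'$-ideal, so that the notion of properness applies to it. Since $\afrak$ is a finitely generated $\Or$-submodule of $K$ and $\Or\subset\Or'$, the product $\afrak\Or'$ is a finitely generated $\Or'$-submodule of $K$, and it is nonzero because $\afrak$ is; moreover, if $d\in\Or\setminus\{0\}$ satisfies $d\afrak\subset\Or$, then $d(\afrak\Or')\subset\Or'$. Hence $\afrak\Or'$ is a fractional $\Or'$-ideal.

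Next, since $\afrak$ is a proper $\Or$-ideal, it is invertible, so there is a fractional $\Or$-ideal $\bfrak$ with $\afrak\bfrak=\Or$. Applying Proposition \ref{seki} to $\afrak$ and $\bfrak$ gives
\[
(\afrak\Or')(\bfrak\Or') = (\afrak\bfrak)\Or' = \Or\Or' = \Or'.
\]
Therefore $\afrak\Or'$ is an invertible fractional $\Or'$-ideal, hence a proper $\Or'$-ideal, as claimed.

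Since each step is immediate once Proposition \ref{seki} and the properness–invertibility equivalence are available, I do not expect a genuine obstacle here; the only point requiring a little care is the bookkeeping in the second paragraph verifying that $\afrak\Or'$ is really a fractional $\Or'$-ideal. (If one preferred a direct argument avoiding the invertibility characterization, one could instead try to show $\{\beta\in K : \beta(\afrak\Or')\subset\afrak\Or'\}=\Or'$ by hand, but pushing an inclusion $\beta\afrak\Or'\subset\afrak\Or'$ back down to $\Or$ is exactly where invertibility of $\afrak$ enters, so the route above seems cleanest.)
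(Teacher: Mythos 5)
Your proof is correct and takes essentially the same route as the paper: the paper also deduces properness of $\afrak\Or'$ from invertibility by pushing an inverse of $\afrak$ up to $\Or'$ via Proposition \ref{seki}, the only cosmetic difference being that it uses the concrete inverse $\overline{\afrak}/N(\afrak)$ supplied by Proposition \ref{normseki} where you invoke an abstract inverse $\bfrak$.
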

\begin{proof}
Proposition \ref{normseki} %
implies that 
$\afrak\overline{\afrak}=N(\afrak)\Or$. 
By Proposition \ref{seki}, we have
\[
(\afrak\Or')(\overline{\afrak\Or'})
=N(\afrak)\Or'.
\]
Thus we get
\[
(\afrak\Or')\cdot1/N(\afrak)(\overline{\afrak\Or'})=\Or'.
\]
Therefore $\afrak\Or'$ is invertible. 
Hence 
we see that $\afrak\Or'$ is proper. 
\end{proof}

\begin{Prop}\label{norm}
Let $\Or$, $\Or'$ be orders in an imaginary quadratic field with $\Or\subset\Or'$. 
Let $\afrak$ be a proper $\Or$-ideal. Then
\[
N(\afrak) = N(\afrak\Or') 
\]
\end{Prop}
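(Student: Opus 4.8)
The plan is to compare the two principal ideals obtained by multiplying $\afrak$, respectively $\afrak\Or'$, by their complex conjugates, and to read off the two norms from there. First I would invoke Proposition \ref{normseki}(ii) for the order $\Or$: since $\afrak$ is a proper $\Or$-ideal, $\afrak\overline{\afrak} = N(\afrak)\Or$. Multiplying both sides by $\Or'$ and applying Proposition \ref{seki} (and the obvious fact that $\Or\cdot\Or' = \Or'$), together with the observation that complex conjugation fixes $\Or'$ so that $\overline{\afrak}\,\Or' = \overline{\afrak\Or'}$, this yields
\[
(\afrak\Or')\,\overline{(\afrak\Or')} = N(\afrak)\,\Or'.
\]

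Next, by Proposition \ref{proper} the ideal $\afrak\Or'$ is a proper $\Or'$-ideal, so Proposition \ref{normseki}(ii), this time applied to the order $\Or'$, gives $(\afrak\Or')\,\overline{(\afrak\Or')} = N(\afrak\Or')\,\Or'$. Comparing with the previous display, $N(\afrak)\,\Or' = N(\afrak\Or')\,\Or'$. To finish I would cancel the common ideal: both $N(\afrak)$ and $N(\afrak\Or')$ are positive rationals, and if $n\Or' = n'\Or'$ then $n/n'$ and $n'/n$ both lie in $\Or'\cap\Q = \Z$, forcing $n/n' = 1$; hence $N(\afrak) = N(\afrak\Or')$. (If one wishes to avoid fractional ideals entirely, one may first replace $\afrak$ by $k\afrak$ for a suitable positive integer $k$ to make it integral, which multiplies both sides of the claimed equality by $k^2$.)

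I do not expect a genuine obstacle here: the statement is essentially a formal consequence of Propositions \ref{normseki}, \ref{seki} and \ref{proper}. The only points needing a word of justification are the identity $\overline{\afrak}\,\Or' = \overline{\afrak\Or'}$ (immediate, since $\overline{\Or'} = \Or'$ and conjugation is a ring automorphism) and the cancellation of principal $\Or'$-ideals (immediate, since $\Or'\cap\Q = \Z$). If anything, the mildly delicate part is simply making sure every ideal in sight is proper so that Proposition \ref{normseki} applies, which is exactly what Proposition \ref{proper} secures.
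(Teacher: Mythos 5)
Your proposal is correct and follows the paper's own argument essentially verbatim: apply Proposition \ref{normseki} to get $\afrak\overline{\afrak}=N(\afrak)\Or$, push this up to $\Or'$ via Proposition \ref{seki}, and use Proposition \ref{proper} to identify $N(\afrak)$ with $N(\afrak\Or')$. The only difference is that you spell out the final cancellation $N(\afrak)\Or'=N(\afrak\Or')\Or'$ and the conjugation identity explicitly, which the paper leaves implicit.
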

\begin{proof}
Proposition \ref{normseki} implies that $\afrak\overline{\afrak}=N(\afrak)\Or$. 
By Proposition \ref{seki}, we have
\[
\afrak\Or' \overline{\afrak\Or'} 
= N(\afrak)\Or'. 
\]
Since $\afrak\Or'$ is proper by Proposition \ref{proper}, 
we have 
$N(\afrak) = N(\afrak\Or')$. 
\end{proof}

Now we pay attention to 
whether $\Or'$-ideals will be prime to the relative conductor $r$.

\begin{Prop}\label{sonaranorm}
Let $\Or$, $\Or'$ be orders in an imaginary quadratic field with $\Or\subset\Or'$, 
and 
$r$ 
the relative conductor of $\Or$ in $\Or'$. 
Let $\afrak'$ be an 
$\Or'$-ideal prime to $r$. Then
\begin{align*}
N(\afrak'\cap\Or)=N(\afrak'), 
\end{align*}
thus $\afrak'\cap\Or$ is also an 
$\Or$-ideal prime to $r$.
\end{Prop}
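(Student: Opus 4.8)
The plan is to exploit the fact that for an order, being prime to an integer is equivalent to having norm coprime to that integer (Cox \cite[\S7-C]{Cox}), so that the whole statement reduces to the single norm equality $N(\afrak'\cap\Or)=N(\afrak')$; once that is established, the conclusion that $\afrak'\cap\Or$ is prime to $r$ is immediate, since $N(\afrak'\cap\Or)=N(\afrak')$ is coprime to $r$. So the real content is the norm computation.

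First I would observe that $\afrak'\cap\Or$ is indeed an $\Or$-ideal, and I would like to apply Proposition \ref{Ikf,Iof}-style reasoning: the map $\afrak'\mapsto\afrak'\cap\Or$ should be, on ideals prime to $r$, the inverse of $\afrak\mapsto\afrak\Or'$. The point of hypothesizing $\afrak'$ prime to $r$ is exactly that it brings us into the good range where the contraction-extension maps are mutually inverse bijections between $I_K(f)$-type groups for the pair $\Or\subset\Or'$. Concretely, I would argue that if $\afrak'$ is prime to $r$ then $(\afrak'\cap\Or)\Or' = \afrak'$: the inclusion $(\afrak'\cap\Or)\Or'\subset\afrak'$ is trivial, and for the reverse I would use that $\afrak'+r\Or'=\Or'$ together with $r\Or'\subset\Or$ (which holds because $r=|\Or'/\Or|$ kills $\Or'/\Or$, hence $r\Or'\subset\Or$) to write any element of $\afrak'$ as something in $\afrak'\cap\Or$ plus something in $r\afrak'\subset\afrak'\cap\Or$. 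This gives $\afrak' = (\afrak'\cap\Or)\Or'$.

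Then I would apply Proposition \ref{norm} with the proper $\Or$-ideal $\bfrak:=\afrak'\cap\Or$: that proposition says $N(\bfrak)=N(\bfrak\Or')$, and since $\bfrak\Or'=\afrak'$ we get $N(\afrak'\cap\Or)=N(\afrak')$ as desired. One small gap to fill is that Proposition \ref{norm} requires $\afrak'\cap\Or$ to be a \emph{proper} $\Or$-ideal; I would verify this either directly (an ideal prime to the conductor is automatically proper, a fact used implicitly via Cox \cite[\S7-C]{Cox}) or by noting that an $\Or$-ideal whose norm is prime to the conductor $f$ of $\Or$ is proper, which applies here since $r\mid f$ and $N(\afrak'\cap\Or)=N(\afrak')$ is prime to $r$ — though to get primeness to $f$ one might need $\afrak'$ prime to $f$ rather than merely to $r$. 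Alternatively, one avoids this by first showing $\afrak'\cap\Or$ is proper directly from $(\afrak'\cap\Or)\Or'=\afrak'$ and invertibility of $\afrak'$, mimicking the proof of Proposition \ref{proper} in reverse.

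The main obstacle I anticipate is the reverse inclusion $\afrak'\subset(\afrak'\cap\Or)\Or'$, i.e. making precise the claim that being prime to $r$ lets one descend: the argument $\afrak' = \afrak'\cdot\Or' = \afrak'(\afrak'+r\Or') \subset \dots$ needs to be organized so that every term visibly lands in $\Or$, and the cleanest route uses $r\Or'\subset\Or$ plus $r\afrak'\subset\afrak'$. Everything else (the norm multiplicativity in Proposition \ref{normseki}, Proposition \ref{seki}, Proposition \ref{norm}, and the norm-vs-primeness dictionary) is already available in the excerpt and should slot in mechanically.
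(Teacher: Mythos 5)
Your route is genuinely different from the paper's, and it has two real gaps. First, your endgame relies on Proposition \ref{norm}, which is stated only for \emph{proper} $\Or$-ideals; but Proposition \ref{sonaranorm} assumes merely that $\afrak'$ is an $\Or'$-ideal prime to $r$ --- neither $\afrak'$ nor $\afrak'\cap\Or$ is assumed proper, and primeness to the \emph{relative} conductor $r$ does not give primeness to the conductor of $\Or$ (or of $\Or'$), so the ``prime to the conductor implies proper'' dictionary is unavailable, as you yourself suspect. Your alternative fix invokes ``invertibility of $\afrak'$'', which is again not a hypothesis. The paper sidesteps all of this: its proof is a pure counting argument --- the natural injection $\phi\colon \Or/(\afrak'\cap\Or)\hookrightarrow \Or'/\afrak'$ is shown to be surjective by composing with the inverse of multiplication by $r$ on $\Or'/\afrak'$ (an automorphism of this finite group since $\afrak'+r\Or'=\Or'$), using $r\Or'\subset\Or$; equality of the two indices is the norm equality, with no properness needed anywhere.

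Second, the key inclusion $\afrak'\subset(\afrak'\cap\Or)\Or'$ is exactly the paper's Lemma \ref{sonaraissyu}, and the paper proves that lemma \emph{from} Proposition \ref{sonaranorm}: the computation there expands $\afrak'\Or=\afrak'\bigl((\afrak'\cap\Or)+r\Or\bigr)$, i.e.\ it needs $(\afrak'\cap\Or)+r\Or=\Or$, which is precisely the ``thus'' clause of the statement you are trying to prove. Your substitute, expanding $\afrak'=\afrak'(\afrak'+r\Or')=\afrak'\afrak'+r\afrak'$, leaves the term $\afrak'\afrak'$, which does not visibly lie in $(\afrak'\cap\Or)\Or'$; and as literally written (``every element of $\afrak'$ is something in $\afrak'\cap\Or$ plus something in $r\afrak'\subset\afrak'\cap\Or$'') your claim would force $\afrak'\subset\Or$, which is false in general. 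The circularity can in fact be broken --- from $\Or'=\afrak'+r\Or'$ one deduces $(\afrak'\cap\Or)+r\Or=\Or$ by multiplying through by $r$ and using $r\Or'\subset\Or$ --- but that derivation is not in your sketch, and even with it you still face the properness obstruction above. So the proposal as written does not close; the paper's quotient-ring counting argument is both shorter and strictly more general.
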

\begin{proof}
Consider the natural injection 
\[
\phi \colon 
\Or/\afrak'\cap\Or 
\hookrightarrow 
\Or'/\afrak'.
\]
Let 
$\psi \colon \Or'/\afrak' \to \Or'/\afrak'$ be 
the multiplication map by $r$.
By the structure theorem for finite Abelian groups, 
we see that 
$\psi$ is an isomorphism.

Now we claim that 
$\psi^{-1}\circ\phi$ 
is surjective. 
Let $x\in\Or'$.
Since $r\Or' \subset \Or$, we see that $r x\in\Or$. 
Hence 
we have
\[
\psi^{-1}\circ\phi \;(rx+\afrak'\cap\Or)=\psi^{-1}(rx+\afrak')=x+\afrak'.
\]
Therefore the claim is proved. 

Since $\psi^{-1}\circ\phi$ is surjective and $\psi^{-1}$ is injective, 
we can see that $\phi$ is surjective. Thus $\phi$ is an isomorphism. 
Hence we get 
$N(\afrak'\cap\Or)=|\Or/\afrak'\cap\Or|=|\Or'/\afrak'|=N(\afrak')$.
\end{proof}

\begin{Lem}\label{sonaraissyu}
Let $\Or$, $\Or'$ be orders in an imaginary quadratic field with $\Or\subset\Or'$, 
and $r$ 
the relative conductor of $\Or$ in $\Or'$. 
Let $\afrak'$ be an 
$\Or'$-ideal 
prime to $r$.  
Then
\[
(\afrak'\cap\Or)\Or' = \afrak'.
\]
\end{Lem}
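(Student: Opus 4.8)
Write $\afrak := \afrak'\cap\Or$. The inclusion $\afrak\Or' \subseteq \afrak'$ is immediate: $\afrak \subseteq \afrak'$, and $\afrak'$ is closed under multiplication by $\Or'$, so $\afrak\Or' \subseteq \afrak'\Or' = \afrak'$. The whole content is the reverse inclusion $\afrak' \subseteq \afrak\Or'$, and the plan is to prove it by a ``partition of unity'' trick rather than by comparing norms.

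First I would record the two properties of the relative conductor that the argument needs. Since $r = |\Or'/\Or|$ is the order of the finite abelian group $\Or'/\Or$, we have $r\Or' \subseteq \Or$ (this was already used in the proof of Proposition \ref{sonaranorm}). And since $\afrak'$ is prime to $r$, its norm $N(\afrak') = |\Or'/\afrak'|$ is coprime to $r$, so multiplication by $r$ is an automorphism of the finite group $\Or'/\afrak'$; hence $\afrak' + r\Or' = \Or'$.

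The key step — and the one point that requires a small idea, after which everything is routine — is to choose the witness of primality inside $\afrak$ itself. From $\afrak' + r\Or' = \Or'$, pick $\alpha \in \afrak'$ and $\beta \in \Or'$ with $\alpha + r\beta = 1$. Then $\alpha = 1 - r\beta$ lies in $\Or$, because $1 \in \Or$ and $r\beta \in r\Or' \subseteq \Or$; therefore $\alpha \in \afrak'\cap\Or = \afrak$.

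With such an $\alpha$ in hand, the conclusion is a one-line computation. For arbitrary $x \in \afrak'$, write $x = x\alpha + x(r\beta)$. The first summand lies in $\afrak\Or'$ since $\alpha \in \afrak$ and $x \in \Or'$. For the second, $x\beta \in \afrak'$ and $r(x\beta) \in r\Or' \subseteq \Or$, so $x(r\beta) = r(x\beta) \in \afrak'\cap\Or = \afrak \subseteq \afrak\Or'$. Hence $x \in \afrak\Or'$, giving $\afrak' \subseteq \afrak\Or'$ and thus equality. (One could instead try to deduce the claim from $N(\afrak\Or') = N(\afrak) = N(\afrak')$ via Propositions \ref{norm} and \ref{sonaranorm}, but Proposition \ref{norm} requires $\afrak$ to be a proper $\Or$-ideal, which is not evident a priori; the argument above avoids this altogether, and in fact never uses that $K$ is imaginary.)
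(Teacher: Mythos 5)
Your proof is correct, and it runs on the same engine as the paper's --- a ``partition of unity'' coming from coprimality to $r$, combined with the containment $r\Or' \subseteq \Or$ --- but it enters from a different door. The paper first invokes Proposition \ref{sonaranorm} to conclude that $\afrak'\cap\Or$ is prime to $r$ \emph{as an $\Or$-ideal}, writes $\Or = (\afrak'\cap\Or) + r\Or$, multiplies this identity into $\afrak'$, and then handles the leftover term $r\afrak'$ by pushing it into $\Or$ via $r\afrak' \subseteq r\Or' \subseteq \Or$. You instead use only the hypothesis as stated, $\afrak' + r\Or' = \Or'$ in $\Or'$, and make the single element-level observation that the witness $\alpha \in \afrak'$ with $\alpha + r\beta = 1$ automatically lies in $\Or$, hence in $\afrak'\cap\Or$; the rest is the same splitting $x = x\alpha + r(x\beta)$ with the second term forced into $\afrak'\cap\Or$ by $r\Or'\subseteq\Or$. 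The payoff of your version is that it is self-contained: it does not depend on Proposition \ref{sonaranorm} (and hence not on the finiteness/norm bookkeeping behind it), and as you note it avoids the temptation to argue via $N(\afrak\Or') = N(\afrak')$, which would require knowing in advance that $\afrak'\cap\Or$ is a proper $\Or$-ideal --- something only established later, in Proposition \ref{sonaraproper}, and whose proof in the paper actually cites this very lemma. The paper's version, in exchange, stays entirely at the level of ideal identities and reuses machinery it needs anyway. Both are valid; yours is marginally more economical in its prerequisites.
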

\begin{proof}
It follows that 
$\afrak'\cap\Or$ is prime to $r$ 
from Proposition \ref{sonaranorm}.
Thus $\afrak'\cap\Or+r\Or = \Or$. Hence we have
\begin{align*}
\afrak' = \afrak'\Or &= \afrak' (\afrak'\cap\Or + r\Or) \\
                             &= \afrak' (\afrak'\cap\Or) + r\afrak'\Or \\
                    &\subset \Or' (\afrak'\cap\Or) + r\afrak'\Or' \\
                             &= (\afrak'\cap\Or)\Or' + r\afrak' .
\end{align*}
Furthermore we see
\[
r\afrak' \subset r\Or' \subset \Or.
\]
Thus we get
\[
r\afrak' = 
\afrak'\cap r\afrak' \subset \afrak'\cap\Or \subset (\afrak'\cap\Or)\Or'.
\]
Hence we have
\begin{align*}
\afrak' \subset (\afrak'\cap\Or)\Or' + (\afrak'\cap\Or)\Or' = (\afrak'\cap\Or)\Or'.
\end{align*}
The other inclusion is obvious.
\end{proof}

\begin{Prop}\label{sonaraproper}
Let $\Or$, $\Or'$ be orders in an imaginary quadratic field with $\Or\subset\Or'$, 
and $r$ the relative conductor of $\Or$ in $\Or'$. 
If $\afrak'$ is a proper $\Or'$-ideal 
prime to $r$, 
then $\afrak'\cap\Or$ is also 
a proper $\Or$-ideal.
\end{Prop}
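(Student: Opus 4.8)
The plan is to reduce the statement to the two facts we have already developed about passing between $\Or$-ideals and $\Or'$-ideals: Proposition \ref{proper} (if $\afrak$ is proper then $\afrak\Or'$ is proper) together with Lemma \ref{sonaraissyu} (if $\afrak'$ is prime to $r$ then $(\afrak'\cap\Or)\Or'=\afrak'$). First I would observe, via Proposition \ref{sonaranorm}, that $\afrak'\cap\Or$ is an $\Or$-ideal prime to $r$, so in particular its norm is prime to $r$; this legitimizes every cancellation involving $r$ below. The goal is to exhibit an $\Or$-ideal $\bfrak$ with $(\afrak'\cap\Or)\bfrak=\Or$.

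The key step is to take the inverse of $\afrak'$ in $I(\Or')$, which exists because $\afrak'$ is proper, and intersect it with $\Or$. Concretely, since $\afrak'$ is invertible there is a fractional $\Or'$-ideal $\bfrak'$ with $\afrak'\bfrak'=\Or'$; I would set $\bfrak=\bfrak'\cap\Or$ (clearing denominators first so that $\bfrak'$ may be taken to be an integral $\Or'$-ideal prime to $r$, replacing the pair $(\afrak',\bfrak')$ by $(\afrak'\afrak'',\bfrak'\afrak'')$ for a suitable principal $\afrak''$ prime to $r$ if necessary — this does not affect properness and keeps everything prime to $r$). Then I would compute $(\afrak'\cap\Or)(\bfrak'\cap\Or)$: using Proposition \ref{seki} applied to the $\Or$-ideals $\afrak'\cap\Or$ and $\bfrak'\cap\Or$, extending to $\Or'$ and invoking Lemma \ref{sonaraissyu} twice gives
\[
\bigl((\afrak'\cap\Or)(\bfrak'\cap\Or)\bigr)\Or' = \bigl((\afrak'\cap\Or)\Or'\bigr)\bigl((\bfrak'\cap\Or)\Or'\bigr) = \afrak'\bfrak' = \Or'.
\]
So the $\Or$-ideal $\cfrak:=(\afrak'\cap\Or)(\bfrak'\cap\Or)$ satisfies $\cfrak\Or'=\Or'$ and is prime to $r$ (a product of ideals prime to $r$), hence by Lemma \ref{sonaraissyu} again $\cfrak=\cfrak\cap\Or=(\cfrak\Or')\cap\Or=\Or'\cap\Or=\Or$. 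This exhibits $(\bfrak'\cap\Or)/1$ as an inverse of $\afrak'\cap\Or$ (after undoing the clearing-of-denominators normalization by the corresponding principal factor), so $\afrak'\cap\Or$ is invertible, hence proper by Cox \cite[Proposition 2.1]{Cox}.

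The main obstacle I anticipate is bookkeeping around denominators and the normalization that lets me assume $\bfrak'$ is integral and prime to $r$: Lemma \ref{sonaraissyu} is stated for integral $\Or'$-ideals prime to $r$, so I must be careful that the auxiliary principal ideal $\afrak''$ used to clear denominators is itself chosen prime to $r$ (possible since $r\Or$ has finite index, so some principal ideal generated by an integer prime to $r$ will do the job), and then track that multiplying through by a principal fractional ideal preserves the relation $(\afrak'\cap\Or)\bfrak=\Or$ after division. An alternative, cleaner route that sidesteps the inverse entirely is to use the conjugate: Proposition \ref{norm} gives $N(\afrak'\cap\Or)=N(\afrak')$, and Proposition \ref{normseki}(ii) gives $(\afrak'\cap\Or)\overline{(\afrak'\cap\Or)}\supseteq$ (the product of the two in $K$) — but establishing the reverse inclusion to get exactly $N(\afrak'\cap\Or)\Or$ is essentially the properness we want, so this is somewhat circular unless handled carefully; I would keep the invertibility argument above as the primary approach and only fall back on the conjugate computation if the denominator bookkeeping becomes unwieldy.
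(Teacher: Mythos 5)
Your overall strategy --- proving that $\afrak'\cap\Or$ is invertible by contracting an inverse of $\afrak'$ and then invoking the equivalence of invertibility and properness --- is viable and genuinely different from the paper's proof. The paper works directly with the multiplier-ring definition: it takes $\beta\in K$ with $\beta(\afrak'\cap\Or)\subset\afrak'\cap\Or$, extends via Lemma \ref{sonaraissyu} to get $\beta\afrak'\subset\afrak'$ and hence $\beta\in\Or'$, and then uses $(\afrak'\cap\Or)+r\Or=\Or$ together with $r\Or'\subset\Or$ to force $\beta\Or\subset\Or$. That route needs no inverse ideal, no normalization, and no statement about the composite ``extend then contract.''

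As written, however, your argument has a genuine gap at the final step. You pass from $\cfrak\Or'=\Or'$ to $\cfrak=(\cfrak\Or')\cap\Or$ ``by Lemma \ref{sonaraissyu} again,'' but Lemma \ref{sonaraissyu} asserts $(\afrak'\cap\Or)\Or'=\afrak'$ for $\Or'$-ideals prime to $r$ --- the composite in the \emph{other} order --- and says nothing about $(\cfrak\Or')\cap\Or$ for an $\Or$-ideal $\cfrak$. The equality $\cfrak\cap\Or=(\cfrak\Or')\cap\Or$ is precisely the nontrivial content at that point, so the step is unsupported (indeed circular as phrased). It is easily repaired: since $\cfrak$ is prime to $r$ one has $\Or=\cfrak+r\Or$, while $r\Or'=r(\cfrak\Or')=\cfrak(r\Or')\subset\cfrak\Or=\cfrak$, whence $\Or=\cfrak+r\Or\subset\cfrak+r\Or'\subset\cfrak$, i.e.\ $\cfrak=\Or$. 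A second, smaller wobble is the normalization: replacing $(\afrak',\bfrak')$ by $(\afrak'\afrak'',\bfrak'\afrak'')$ does not preserve the relation $\afrak'\bfrak'=\Or'$, so after clearing denominators the product becomes a principal integral ideal $n\Or'$ with $n$ prime to $r$ and that factor must be carried through; the cleanest choice is $\bfrak'=\overline{\afrak'}$, which is integral and prime to $r$ with $\afrak'\overline{\afrak'}=N(\afrak')\Or'$ by Proposition \ref{normseki}, and the repaired final step then yields $\cfrak=N(\afrak')\Or$, from which invertibility of $\afrak'\cap\Or$ still follows.
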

\begin{proof}
Since $\afrak'$ is proper, 
we see that $\Or'=\{\beta\in K \mid \beta\afrak'\subset\afrak'\}$. 
Proposition \ref{sonaranorm} implies that $\afrak'\cap\Or$ is prime to $r$. 
Thus $(\afrak'\cap\Or)+r\Or=\Or$. 
Let $\beta\in K$ satisfy $\beta(\afrak'\cap\Or)\subset \afrak'\cap\Or$. 
By Lemma \ref{sonaraissyu}, we obtain
\[
\beta\afrak'=\beta(\afrak'\cap\Or)\Or' \subset (\afrak'\cap\Or)\Or'=\afrak'.
\]
It follows that 
$\beta\in\Or'$.
We thus have
\begin{align*}
\beta\Or
&=\beta\bigl((\afrak'\cap\Or)+r\Or \bigr) \\
&=\beta(\afrak'\cap\Or) + \beta r\Or \\
&\subset (\afrak'\cap\Or) + r\Or'.
\end{align*}
However $r\Or'\subset\Or$, 
which proves that $\beta\Or \subset \Or$. 
Thus we see that $\beta\in\Or$. 
Therefore we see that 
$\Or \supset \{\beta\in K \mid \beta(\afrak'\cap\Or) \subset \afrak'\cap\Or\}$. 
The other inclusion is obvious. 
Thus $\afrak'\cap\Or$ is proper.
\end{proof}

\begin{Prop}\label{sonaraseki}
Let $\Or$, $\Or'$ be orders in an imaginary quadratic field with $\Or\subset\Or'$, 
and $r$ the relative conductor of $\Or$ in $\Or'$. 
Let $\afrak',\bfrak'$ be proper $\Or'$-ideals prime to $r$. Then
\[
\afrak'\bfrak'\cap\Or = (\afrak'\cap\Or) (\bfrak'\cap\Or).
\]
\end{Prop}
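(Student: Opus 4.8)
The plan is to prove the set equality $\afrak'\bfrak'\cap\Or = (\afrak'\cap\Or)(\bfrak'\cap\Or)$ by combining the results already established for the contraction map $\afrak'\mapsto\afrak'\cap\Or$ and the extension map $\afrak\mapsto\afrak\Or'$. The key observation is that these two operations are mutually inverse on ideals prime to the relative conductor $r$: Lemma~\ref{sonaraissyu} gives $(\afrak'\cap\Or)\Or' = \afrak'$ for $\Or'$-ideals prime to $r$, and conversely Propositions~\ref{Ikf,Iof}-style reasoning (or a direct argument) gives $(\afrak\Or')\cap\Or = \afrak$ for $\Or$-ideals prime to $r$. So the strategy is to contract the product, then re-extend, use multiplicativity of extension (Proposition~\ref{seki}), and contract back.

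First I would check that all the relevant ideals are proper and prime to $r$, so that the earlier propositions apply. Since $\afrak'$ and $\bfrak'$ are proper $\Or'$-ideals prime to $r$, their product $\afrak'\bfrak'$ is again a proper $\Or'$-ideal prime to $r$ (properness is preserved under products of invertible ideals; primeness to $r$ follows since $N(\afrak'\bfrak') = N(\afrak')N(\bfrak')$ is prime to $r$ by Proposition~\ref{normseki}). Hence by Proposition~\ref{sonaraproper} the contractions $\afrak'\cap\Or$, $\bfrak'\cap\Or$, and $\afrak'\bfrak'\cap\Or$ are all proper $\Or$-ideals, and by Proposition~\ref{sonaranorm} they are all prime to $r$. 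In particular $(\afrak'\cap\Or)(\bfrak'\cap\Or)$ is a proper $\Or$-ideal prime to $r$.

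Next I would compute, using Lemma~\ref{sonaraissyu} twice and Proposition~\ref{seki}:
\[
\bigl((\afrak'\cap\Or)(\bfrak'\cap\Or)\bigr)\Or'
= \bigl((\afrak'\cap\Or)\Or'\bigr)\bigl((\bfrak'\cap\Or)\Or'\bigr)
= \afrak'\bfrak'.
\]
Thus both $(\afrak'\cap\Or)(\bfrak'\cap\Or)$ and $\afrak'\bfrak'\cap\Or$ are $\Or$-ideals prime to $r$ whose extension to $\Or'$ equals $\afrak'\bfrak'$. Since the map $\afrak\mapsto\afrak\Or'$ is injective on $\Or$-ideals prime to $r$ — this is the content of the bijection between $\Or$-ideals and $\Or'$-ideals prime to the relative conductor, where the inverse of extension is contraction (apply Lemma~\ref{sonaraissyu} with $\afrak' = \afrak\Or'$, noting $\afrak\Or'\cap\Or = \afrak$ for $\afrak$ prime to $r$) — the two ideals must coincide. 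One direction of the desired equality, namely $(\afrak'\cap\Or)(\bfrak'\cap\Or) \subset \afrak'\bfrak'\cap\Or$, is in any case immediate from the definitions, so it would suffice to establish the reverse inclusion, which the injectivity argument delivers.

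The main obstacle is making sure the "contraction is the inverse of extension" fact is properly justified in this relative setting: the excerpt states the analogue (Proposition~\ref{Ikf,Iof}) only for the conductor $f$ of $\Or$ inside $\Or_K$, not for an arbitrary pair $\Or\subset\Or'$ with relative conductor $r$. I would therefore spell out that $(\afrak\Or')\cap\Or = \afrak$ for any $\Or$-ideal $\afrak$ prime to $r$: the inclusion $\supset$ is clear, and for $\subset$ one writes $\afrak = \afrak\Or = \afrak(\afrak + r\Or) \supset \ldots$ in the same style as the proof of Lemma~\ref{sonaraissyu}, using $r\Or'\subset\Or$ to control the cross term. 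Once both $(\afrak\Or')\cap\Or=\afrak$ and $(\afrak'\cap\Or)\Or'=\afrak'$ are available, injectivity of extension on ideals prime to $r$ is automatic, and the proposition follows.
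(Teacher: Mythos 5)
Your argument is correct, but it takes a genuinely different route from the paper. The paper proves the easy inclusion $(\afrak'\cap\Or)(\bfrak'\cap\Or)\subset\afrak'\bfrak'\cap\Or$ and then shows the two sides have the same norm, i.e.\ the same finite index in $\Or$, by combining multiplicativity of the norm for proper ideals (Proposition~\ref{normseki}) with norm preservation under contraction (Proposition~\ref{sonaranorm}); containment plus equal index forces equality. You instead extend both candidate ideals back to $\Or'$, observe via Proposition~\ref{seki} and Lemma~\ref{sonaraissyu} that both extensions equal $\afrak'\bfrak'$, and conclude by injectivity of extension on $\Or$-ideals prime to $r$. You correctly identify the one missing ingredient — the identity $(\afrak\Or')\cap\Or=\afrak$ for $\Or$-ideals prime to $r$, which the paper only records for the pair $\Or\subset\Or_K$ in Proposition~\ref{Ikf,Iof} — and your sketch of it (expand $\afrak\Or'\cap\Or=(\afrak\Or'\cap\Or)(\afrak+r\Or)$ and use $r\Or'\subset\Or$) is the right one and goes through. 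The paper's count is shorter given the lemmas already in place; your route costs one extra lemma but establishes the full relative analogue of Proposition~\ref{Ikf,Iof}, namely that contraction and extension are mutually inverse bijections on ideals prime to the relative conductor, which is a reusable structural fact rather than a one-off norm computation. Both proofs are valid.
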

\begin{proof}
First, 
we have
\[
\afrak'\bfrak'\cap\Or \supset (\afrak'\cap\Or) (\bfrak'\cap\Or).
\]
Proposition \ref{sonaraproper} implies that 
$\afrak'\cap\Or$, $\bfrak'\cap\Or$ are proper $\Or$-ideals. 
By Proposition \ref{normseki}, we get
\[
N(\afrak'\bfrak') 
=N(\afrak') N(\bfrak'), \; 
N\bigl( (\afrak'\cap\Or)(\bfrak'\cap\Or) \bigr)
=N(\afrak'\cap\Or) N(\bfrak'\cap\Or) .
\]
Proposition \ref{sonaranorm} implies that
\[
N(\afrak'\cap\Or)=N(\afrak'), \;
N(\bfrak'\cap\Or)=N(\bfrak'), \;
N(\afrak'\bfrak'\cap\Or)=N(\afrak'\bfrak').
\]
We thus have
\begin{align*}
N(\afrak'\bfrak'\cap\Or)
&=N(\afrak'\bfrak') 
=N(\afrak') N(\bfrak') 
=N(\afrak'\cap\Or) N(\bfrak'\cap\Or) 
=N\bigl( (\afrak'\cap\Or)(\bfrak'\cap\Or) \bigr).
\end{align*}
Therefore, we see that 
\[
\afrak'\bfrak'\cap\Or = (\afrak'\cap\Or) (\bfrak'\cap\Or).
\]
This completes the proof.
\end{proof}

\section{Decomposition of proper $\Or$-ideals}\label{Decomp}
We prove Theorem \ref{yosoCor} in this section. To prove this theorem, we prepare the following lemma.

\begin{Lem}\label{result2}
Let $\Or$ be the order of conductor $f=l^\lambda$ in an imaginary quadratic field $K$, 
where 
$\lambda >0$ and the $l$ is a prime, 
and let $D$ be the discriminant of $\Or$.
Let $\afrak$ be a proper $\Or$-ideal, 
and 
let $m$ be an arbitrary positive integer. 
Write 
\[
m=n l^{h}, 
\]
where 
\begin{itemize}
\item $n$ is an integer relatively prime to $f$, 
\item $h \geq0$. 
\end{itemize}
Then, the followings are equivalent.
\begin{enumerate}[label=$(\mathrm{\roman*})$]
\item $N(\afrak)=m$.
\item There exist 
\begin{itemize}
\item a proper $\Or$-ideal $\bfrak$ where $N(\bfrak)=n$, 
\item a proper $\Or$-ideal $\cfrak$ where $N(\cfrak)=l^h$, 
\end{itemize}
such that 
\[
\afrak=\bfrak\cfrak.
\]
\end{enumerate}
\end{Lem}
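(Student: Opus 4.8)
The plan is to prove the nontrivial implication $(\mathrm{i})\Rightarrow(\mathrm{ii})$ by lifting $\afrak$ to the maximal order $\Or_K$, where unique factorization into prime ideals is available, separating the part of the factorization that lies over $l$ from the part prime to $l$, and then descending back to $\Or$. The converse $(\mathrm{ii})\Rightarrow(\mathrm{i})$ is immediate: if $\afrak=\bfrak\cfrak$ with $N(\bfrak)=n$ and $N(\cfrak)=l^h$, then Proposition \ref{normseki}$(\mathrm{i})$ gives $N(\afrak)=N(\bfrak)N(\cfrak)=nl^h=m$.

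For $(\mathrm{i})\Rightarrow(\mathrm{ii})$ I would first set $\afrak'=\afrak\Or_K$. By Proposition \ref{proper} this is a proper (hence invertible) $\Or_K$-ideal, and by Proposition \ref{norm} it has norm $N(\afrak')=N(\afrak)=m$. Since $\Or_K$ is a Dedekind domain, $\afrak'$ factors uniquely into prime ideals; I collect the prime factors lying over $l$ into an ideal $\cfrak'$ and all remaining prime factors into an ideal $\bfrak'$, so that $\afrak'=\bfrak'\cfrak'$ with $\bfrak'$ prime to $l$. Then $N(\cfrak')$ is a power of $l$ while $N(\bfrak')$ is prime to $l$, and since $N(\bfrak')N(\cfrak')=m=nl^h$ with $\gcd(n,l)=1$, comparing $l$-adic valuations forces $N(\bfrak')=n$ and $N(\cfrak')=l^h$. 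Because $f=l^\lambda$, the ideal $\bfrak'$ is prime to $f$.

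Next I would descend to $\Or$. By Proposition \ref{Ikf,Iof}, applied with conductor $f$, the ideal $\bfrak:=\bfrak'\cap\Or$ is a proper $\Or$-ideal, prime to $f$, with $\bfrak\Or_K=\bfrak'$, and $N(\bfrak)=N(\bfrak')=n$ by Proposition \ref{norm}. Since $\bfrak$ is invertible, I set $\cfrak:=\bfrak^{-1}\afrak$, a proper fractional $\Or$-ideal. To see that $\cfrak$ is integral, observe that $\afrak\Or_K=\afrak'=\bfrak'\cfrak'\subseteq\bfrak'=\bfrak\Or_K$ and $\afrak\subseteq\Or$, whence $\afrak\subseteq\bfrak\Or_K\cap\Or=\bfrak$ (the last equality being part of Proposition \ref{Ikf,Iof}); therefore $\cfrak=\bfrak^{-1}\afrak\subseteq\bfrak^{-1}\bfrak=\Or$. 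Thus $\cfrak$ is a proper $\Or$-ideal with $\bfrak\cfrak=\afrak$, and Proposition \ref{normseki}$(\mathrm{i})$ gives $N(\bfrak)N(\cfrak)=N(\afrak)=m$, so $N(\cfrak)=m/n=l^h$. This produces the required $\bfrak$ and $\cfrak$.

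The step I expect to be the main obstacle is the integrality of $\cfrak$, equivalently the inclusion $\afrak\subseteq\bfrak$: the ``$l$-part'' $\cfrak'$ is \emph{not} prime to the conductor, so Proposition \ref{Ikf,Iof} cannot be invoked to pull it back directly, and the remedy is to define $\cfrak$ as $\bfrak^{-1}\afrak$ and deduce its integrality from the inclusion above. Everything else — multiplicativity of the norm and the arithmetic that splits $m$ into its prime-to-$l$ part $n$ and its $l$-part $l^h$ — is routine given the cited propositions. I note also that the relative-conductor results of Section \ref{Relative} are not yet needed for this single-prime lemma, since the relevant overorder can be taken to be $\Or_K$ itself; they enter when this lemma is bootstrapped to the multi-prime statement of Theorem \ref{yosoCor}.
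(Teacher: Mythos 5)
Your proposal is correct and follows essentially the same route as the paper's proof: lift $\afrak$ to $\Or_K$, split off the part of norm $n$ prime to the conductor, pull it back to $\Or$ as $\bfrak=\bfrak'\cap\Or$, define $\cfrak=\bfrak^{-1}\afrak$, and verify integrality via the inclusion $\afrak\subset\afrak\Or_K\cap\Or\subset\bfrak'\cap\Or$. The only differences are cosmetic: you perform the Dedekind factorization and valuation comparison explicitly where the paper cites Theorem \ref{result0}, and you invoke Proposition \ref{Ikf,Iof} for the descent where the paper uses Propositions \ref{sonaranorm} and \ref{sonaraproper} specialized to $\Or'=\Or_K$.
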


\begin{proof}
$[(\mathrm{i})\Rightarrow(\mathrm{ii})]$

\noindent
Proposition \ref{norm} implies that 
$\afrak\Or_K$ is an $\Or_K$-ideal 
satisfying 
$N(\afrak\Or_K)=m$.
By Theorem \ref{result0}, 
there exist proper $\Or_K$-ideals $\bfrak'$, $\cfrak'$, 
where $N(\bfrak')=n$ and $N(\cfrak')={l}^{h}$ 
such that 
\[
\afrak\Or_K=\bfrak' \cfrak'.
\]
Suppose that $\cfrak
=\afrak\cdot
 (\bfrak'\cap\Or)^{-1}$. 
Note that Proposition \ref{sonaraproper} implies that $\bfrak'\cap\Or$ is proper. 
Since $\afrak$ is proper, we see that $\cfrak$ is proper. 

Now we claim that $\cfrak$ is integral. 
We need to show that $\afrak\subset\bfrak'\cap\Or$.
Since $\afrak\Or_K=\bfrak' \cfrak'$, 
we see that $\afrak\Or_K \subset \bfrak'$. 
Thus we have
\[
\afrak \subset \afrak\Or_K\cap\Or \subset \bfrak'\cap\Or.
\]
The claim is proved.

Suppose that $\bfrak=\bfrak'\cap\Or$, then we have $\afrak=\bfrak\cfrak$. 
It remains to show that $N(\bfrak)=n$, $N(\cfrak)=l^{h}$.
Proposition \ref{sonaranorm} implies that 
\[
N(\bfrak)=N(\bfrak'\cap\Or)=N(\bfrak')=n.
\]
By Proposition \ref{normseki}, we get
\[
nl^{h}=m=N(\afrak)=N(\bfrak)N(\cfrak)=n N(\cfrak).
\]
Hence we see $N(\cfrak)=l^{h}$. 
This completes the proof.

\noindent
$[(\mathrm{ii})\Rightarrow(\mathrm{i})]$ 

\noindent
This follows immediately from Proposition \ref{normseki}.
\end{proof}

Now we prove the following key proposition. 
\begin{Prop}\label{yoso}
Let $\Or$ be the order of conductor 
$f$ 
in an imaginary quadratic field $K$. 
Write 
$f={l_1}^{\lambda_1}\cdots{l_t}^{\lambda_t}$, 
where 
$t\geq 0$, $\lambda_k >0$, and 
the $l_k$'s are distinct primes. 
Let  
$D$ be the discriminant of $\Or$. 
Let $\afrak$ be a proper $\Or$-ideal, 
and 
let $m$ be an arbitrary positive integer. 
Write 
\[
m=n \cdot {l_1}^{h_1}\cdots{l_t}^{h_t}, 
\]
where 
\begin{itemize}
\item the $n$ is an integer relatively prime to $f$, 
\item 
$h_1$, $\ldots$ , $h_t \geq0$.
\end{itemize}
Then, the followings are equivalent.
\begin{enumerate}[label=$(\mathrm{\roman*})$]
\item $N(\afrak)=m$.
\item There exist 
\begin{itemize}
\item a proper $\Or$-ideal $\bfrak$ where $N(\bfrak)=n$, 
\item proper $\Or$-ideals $\cfrak_1$, $\ldots$ , $\cfrak_t$, 
where $N(\cfrak_1)={l_1}^{h_1}$, $\ldots$ , $N(\cfrak_{t})={l_{t}}^{h_{t}}$, 
\end{itemize}
such that 
\[
\afrak=\bfrak \cdot \cfrak_1 \cdots \cfrak_t.
\]
\end{enumerate}
\end{Prop}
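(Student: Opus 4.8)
The plan is to reduce Proposition \ref{yoso} to the prime-power case handled in Lemma \ref{result2} by an induction on the number $t$ of distinct prime divisors of the conductor $f$. For the base case $t=0$ we have $f=1$, so $\Or=\Or_K$ and the statement is vacuous (take $\bfrak=\afrak$). For the inductive step, I would peel off the last prime $l_t$: regard $\Or$ as sitting inside the order $\Or'$ of conductor $f'={l_1}^{\lambda_1}\cdots{l_{t-1}}^{\lambda_{t-1}}$, whose relative conductor over $\Or$ is a power of $l_t$ (indeed $r=l_t^{\lambda_t}$ up to the precise index computation, but all that matters is that $r$ is an $l_t$-power, hence prime to $n \cdot {l_1}^{h_1}\cdots{l_{t-1}}^{h_{t-1}}$).

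First I would prove $[(\mathrm{i})\Rightarrow(\mathrm{ii})]$. Given a proper $\Or$-ideal $\afrak$ with $N(\afrak)=m=n\cdot{l_1}^{h_1}\cdots{l_{t-1}}^{h_{t-1}}\cdot l_t^{h_t}$, I first apply Lemma \ref{result2} \emph{with respect to the prime $l_t$ and the order $\Or$} — note $\Or$ has conductor divisible by $l_t$, but Lemma \ref{result2} is stated only for conductor exactly $l^\lambda$, so here I actually need the slightly more general splitting-off statement; the cleanest route is instead to work through $\Or'$. Concretely: set $\afrak'=\afrak\Or'$. By Proposition \ref{proper} this is a proper $\Or'$-ideal, and by Proposition \ref{norm} we have $N(\afrak')=N(\afrak)=m$. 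The order $\Or'$ has conductor $f'$ with only $t-1$ distinct prime divisors, so the inductive hypothesis applied to $\afrak'$ and $m'=n\cdot{l_1}^{h_1}\cdots{l_{t-1}}^{h_{t-1}}$ — wait, $m$ still carries the factor $l_t^{h_t}$, which is now prime to $f'$, so I fold it into the ``$n$'' part and write $\afrak'=\bfrak''\cfrak_1''\cdots\cfrak_{t-1}''$ with $N(\bfrak'')=n l_t^{h_t}$ and $N(\cfrak_k'')={l_k}^{h_k}$. Then I intersect back with $\Or$: since each $\cfrak_k''$ and $\bfrak''$ is prime to the relative conductor $r$ (an $l_t$-power, coprime to their norms), Proposition \ref{sonaraproper} gives that $\cfrak_k''\cap\Or$ is a proper $\Or$-ideal, Proposition \ref{sonaranorm} preserves norms, and Proposition \ref{sonaraseki} turns products into products; so $\afrak=\afrak'\cap\Or=(\bfrak''\cap\Or)(\cfrak_1''\cap\Or)\cdots(\cfrak_{t-1}''\cap\Or)$ with the correct norms. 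Finally, $\bfrak''\cap\Or$ has norm $n l_t^{h_t}$ and is coprime to $l_t$-part issues at the other primes, so one more application of Lemma \ref{result2} (now genuinely with conductor a power of $l_t$ — again via passing to the appropriate intermediate order, or by the companion statement) splits it as $\bfrak\cfrak_t$ with $N(\bfrak)=n$, $N(\cfrak_t)=l_t^{h_t}$. Reindexing gives $\afrak=\bfrak\cfrak_1\cdots\cfrak_t$.

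The converse $[(\mathrm{ii})\Rightarrow(\mathrm{i})]$ is immediate from Proposition \ref{normseki}(i): $N(\afrak)=N(\bfrak)N(\cfrak_1)\cdots N(\cfrak_t)=n\cdot{l_1}^{h_1}\cdots{l_t}^{h_t}=m$.

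I expect the main obstacle to be bookkeeping the passage between orders cleanly: Lemma \ref{result2} is phrased for an order whose conductor is exactly $l^\lambda$, whereas in the induction the prime $l_t$ divides a larger conductor $f$, so one must either (a) route everything through the chain $\Or\subset\Or'\subset\Or_K$ and invoke the relative-conductor machinery of Section \ref{Relative} at each step — checking each ideal in sight is prime to the relevant relative conductor — or (b) first prove a mild generalization of Lemma \ref{result2} allowing extra prime factors in the conductor. Option (a) is preferable since all the needed lemmas (Propositions \ref{seki}, \ref{proper}, \ref{norm}, \ref{sonaranorm}, \ref{sonaraproper}, \ref{sonaraseki} and Lemma \ref{sonaraissyu}) are already in place; the only care required is to verify repeatedly that the relative conductor between consecutive orders is a power of a single prime and hence coprime to the norms of the ideals being intersected or multiplied, which is where the hypotheses ``$p_i$, $q_j$ prime to $f$'' and the coprimality of $n$ to $f$ are used. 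Uniqueness of the decomposition is not asserted and need not be addressed.
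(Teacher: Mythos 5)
Your overall architecture --- induction on $t$ through the chain $\Or\subset\Or'\subset\Or_K$, extending $\afrak$ to $\Or'$, decomposing there, and descending --- is exactly the paper's, but the descent step as you wrote it has two genuine gaps. First, you assert that $\bfrak''$ (with $N(\bfrak'')=n\,l_t^{h_t}$) is prime to the relative conductor $r=l_t^{\lambda_t}$ because $r$ is ``coprime to their norms''; this is false whenever $h_t>0$, so Propositions \ref{sonaranorm}, \ref{sonaraproper} and \ref{sonaraseki} do not apply to $\bfrak''$. Second, and more seriously, the identity $\afrak=\afrak\Or'\cap\Or$ on which your descent relies is not proved anywhere and is false in general when $\afrak$ is not prime to $r$: for $\Or=\Z[2i]\subset\Or'=\Z[i]$ and $\afrak=2\Or$ one has $\afrak\Or'\cap\Or=2\Z[i]=2\Z+2i\Z\supsetneq 2\Z+4i\Z=\afrak$. (Lemma \ref{sonaraissyu} gives only the opposite composition, $(\afrak'\cap\Or)\Or'=\afrak'$, and only for ideals prime to $r$.) Since the whole point of Proposition \ref{yoso} is the case where $N(\afrak)$ shares factors with $f$, this is precisely the case your argument fails to cover. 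Your closing appeal to ``one more application of Lemma \ref{result2}'' to split off the $l_t$-part inside $\Or$ is likewise unavailable: $\Or$ has conductor divisible by $t\geq2$ distinct primes, and the ``companion statement'' you gesture at is left unproved.

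The paper repairs both points by reordering the operations. It first splits the $l_t$-part \emph{at the level of} $\Or_{t-1}=\Or'$: since $n\,l_t^{h_t}$ is prime to the conductor of $\Or'$, Corollary \ref{result1} factors $\bfrak'$ as $\bfrak''\cfrak'_t$ with $N(\bfrak'')=n$ and $N(\cfrak'_t)=l_t^{h_t}$, so that every factor of $\afrak\Or'$ \emph{except} $\cfrak'_t$ is prime to $r$. It then avoids the false identity $\afrak=\afrak\Or'\cap\Or$ altogether: rather than descending $\cfrak'_t$, it \emph{defines} $\cfrak_t:=\afrak\cdot(\bfrak''\cap\Or)^{-1}(\cfrak'_1\cap\Or)^{-1}\cdots(\cfrak'_{t-1}\cap\Or)^{-1}$, proves integrality from the inclusions $\afrak\subset(\afrak\Or')\cap\Or\subset(\bfrak''\cfrak'_1\cdots\cfrak'_{t-1})\cap\Or=(\bfrak''\cap\Or)(\cfrak'_1\cap\Or)\cdots(\cfrak'_{t-1}\cap\Or)$ --- where Proposition \ref{sonaraseki} does apply because these particular factors are prime to $r$ --- and reads off $N(\cfrak_t)=l_t^{h_t}$ from multiplicativity of the norm. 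With that modification your induction goes through; the converse direction via Proposition \ref{normseki} is fine as you state it.
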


\begin{proof}
$[(\mathrm{i})\Rightarrow(\mathrm{ii})]$

\noindent
Let $\Or_k$ be the order of 
discriminant $({l_1}^{\lambda_1}\cdots{l_k}^{\lambda_k})^2 d_K$. 
Then we have 
\[
\Or=\Or_t \subset \Or_{t-1} \subset \dots \subset \Or_1 \subset \Or_K.
\] 
Propositions \ref{proper} and \ref{norm} imply that
$\afrak\Or_{t-1}$ is a proper $\Or_{t-1}$-ideal 
satisfying 
$N(\afrak\Or_{t-1})=m$, when $t\geq 2$. 

We prove by induction on $t$.
Lemma \ref{result2} implies that the case $t=1$ 
holds. 
Now we prove the case $t\geq2$.
By the 
assumption of induction, 
there exist proper $\Or_{t-1}$-ideals $\bfrak'$, $\cfrak'_1$, $\ldots$ , $\cfrak'_{t-1}$, 
where $N(\bfrak')=n {l_t}^{h_t}$, 
$N(\cfrak'_1)={l_1}^{h_1}$, $\ldots$ , $N(\cfrak'_{t-1})={l_{t-1}}^{h_{t-1}}$ 
such that 
\[
\afrak\Or_{t-1}=\bfrak' \cdot \cfrak'_1 \cdots \cfrak'_{t-1}.
\]
Note that the conductor of $\Or_{t-1}$ is 
${l_1}^{\lambda_1}\!\!\cdots{l_{t-1}}^{\lambda_{t-1}}$.
By Corollary \ref{result1}, 
there exist proper $\Or_{t-1}$-ideals $\bfrak''$, $\cfrak'_t$, 
where $N(\bfrak'')=n$ and $N(\cfrak'_t)={l_t}^{h_t}$ 
such that 
\[
\bfrak'=\bfrak'' \cfrak'_t.
\] 
Thus we can write
\[
\afrak\Or_{t-1}=\bfrak''\cdot \cfrak'_1 \cdots \cfrak'_{t-1}\cfrak'_t.
\]
Suppose that 
\[
\cfrak_t
=\afrak\cdot
 (\bfrak''\cap\Or_t)^{-1} (\cfrak'_1\cap\Or_t)^{-1} \cdots (\cfrak'_{t-1}\cap\Or_t)^{-1}.
\] 
Note that 
$\bfrak''\cap\Or_t$, $\cfrak'_1\cap\Or_t$, $\dots$ ,$\cfrak'_{t-1}\cap\Or_t$ 
are proper 
by Proposition \ref{sonaraproper}.
Since $\afrak$ is proper, we see that $\cfrak_t$ is proper. 

Now we claim that $\cfrak_t$ is integral. 
We need to show that 
$\afrak \subset (\bfrak''\cap\Or_t)(\cfrak'_1\cap\Or_t)\cdots(\cfrak'_{t-1}\cap\Or_t)$.
Since $\afrak\Or_{t-1}=\bfrak''\cdot \cfrak'_1 \cdots \cfrak'_{t-1}\cfrak'_t$, 
we see that $\afrak\Or_{t-1} \subset \bfrak''\cdot \cfrak'_1 \cdots \cfrak'_{t-1}$. 
Thus we have
\begin{align*}
\afrak &\subset (\afrak\Or_{t-1})\cap\Or_t \\
         &\subset (\bfrak''\cdot \cfrak'_1 \cdots \cfrak'_{t-1})\cap\Or_t \\
         &= (\bfrak''\cap\Or_t)(\cfrak'_1\cap\Or_t)\cdots(\cfrak'_{t-1}\cap\Or_t).
\end{align*}
The third line follows from Proposition \ref{sonaraseki}. 
The claim is proved. 

Suppose that $\bfrak=\bfrak''\cap\Or_t$, 
$\cfrak_1=\cfrak'_1\cap\Or_t$, $\ldots$ , $\cfrak_{t-1}=\cfrak'_{t-1}\cap\Or_t$, 
then we have $\afrak=\bfrak \cdot \cfrak_1 \cdots \cfrak_t$. 
It remains to show that 
$N(\bfrak)=n$, $N(\cfrak_1)={l_1}^{h_1}$, $\ldots$ , $N(\cfrak_{t})={l_{t}}^{h_{t}}$. 
Proposition \ref{sonaranorm} implies that 
\begin{align*}
N(\bfrak)=N(\bfrak''\cap\Or_t)&=N(\bfrak'')=n, \\
N(\cfrak_1)=N(\cfrak'_1\cap\Or_t)&=N((\cfrak'_1)={l_1}^{h_1}, \\
&\vdots \\
N(\cfrak_{t-1})=N(\cfrak'_{t-1}\cap\Or_t)&=N(\cfrak'_{t-1})={l_{t-1}}^{h_{t-1}}.
\end{align*}
By Proposition \ref{normseki}, we get
\[
n \cdot {l_1}^{h_1}\cdots{l_{t-1}}^{h_{t-1}} {l_t}^{h_t}=m=N(\afrak)
=N(\bfrak)\cdot N(\cfrak_1)\cdots N(\cfrak_{t-1})N(\cfrak_{t})
=n\cdot {l_1}^{h_1}\cdots{l_{t-1}}^{h_{t-1}} N(\cfrak_{t}).
\]
Hence we see $N(\cfrak_t)={l_t}^{h_t}$. 
This completes the proof.

\noindent
$[(\mathrm{ii})\Rightarrow(\mathrm{i})]$ 

\noindent
This follows immediately from Proposition \ref{normseki}.
\end{proof}

Theorem \ref{yosoCor} follows immediately from 
Corollary \ref{result1} and Proposition \ref{yoso}.

\section{Proof of the main theorem}\label{ProofoftheM}%
In this section, we prove Theorem \ref{MainTheorem}-[(i)$\Rightarrow$(ii)]. 
To prove this, 
we prepare the following lemma. 

\begin{Lem}
Let $\Or$ be the order of conductor 
$f$ 
in an imaginary quadratic field $K$, 
Write 
$f={l_1}^{\lambda_1}\cdots{l_t}^{\lambda_t}$, 
where 
$t\geq 0$, $\lambda_k >0$, and 
the $l_k$'s are distinct primes. 
Let 
$D$ be the discriminant of $\Or$. 
Let $\afrak$ be a proper $\Or$-ideal, 
and 
let $m$ be an arbitrary positive integer. 
Write 
\[
m=p_1\cdots p_r \cdot {q_1}^{e_1}\cdots{q_s}^{e_s} \cdot {l_1}^{h_1}\cdots{l_t}^{h_t}
\]
where
\begin{itemize}
\item the $p_i$'s are primes relatively prime to $f$ with $(D/p_i)=0,1$, 
\item the $q_j$'s are distinct primes relatively prime to $f$ with $(D/q_j)=-1$, 
\item $r, s\geq 0$, $e_j >0$, $h_k \geq0$. 
\end{itemize}
Then $(\mathrm{i})$ implies $(\mathrm{ii})$. 
\begin{enumerate}[label=$(\mathrm{\roman*})$]
\item $N(\afrak)=m$.
\item All $e_j$'s are even, and 
there exist 
\begin{itemize}
\item prime $\Or$-ideals $\pfrak_i$'s where $N(\pfrak_i)=p_i$, 
\item proper $\Or$-ideals $\cfrak_k$'s where $N(\cfrak_k)={l_k}^{h_k}$, 
\end{itemize}
such that 
\[
\text{$[\afrak]=[\pfrak_1]\cdots[\pfrak_r] \cdot [\cfrak_1] \cdots [\cfrak_t]$ in $C(\Or)$}.
\]
\end{enumerate}
\end{Lem}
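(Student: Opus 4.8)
The plan is to read off this lemma as an immediate consequence of Theorem \ref{yosoCor}, the only extra ingredient being the passage from the multiplicative structure on proper $\Or$-ideals to the ideal class group $C(\Or) = I(\Or)/P(\Or)$.

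Assume (i), so $N(\afrak) = m$. The factorization of $m$ is written in precisely the shape demanded by Theorem \ref{yosoCor}, and $\afrak$ is a proper $\Or$-ideal, so that theorem applies verbatim: all the $e_j$'s are even, and there exist prime $\Or$-ideals $\pfrak_i$ with $N(\pfrak_i) = p_i$ and proper $\Or$-ideals $\cfrak_k$ with $N(\cfrak_k) = l_k^{h_k}$ such that
\[
\afrak = \pfrak_1 \cdots \pfrak_r \cdot (q_1\Or)^{e_1/2} \cdots (q_s\Or)^{e_s/2} \cdot \cfrak_1 \cdots \cfrak_t .
\]
Each $\pfrak_i$ has norm $p_i$ prime to $f$, hence is prime to $f$ and therefore proper (invertible); each $q_j\Or$ is principal, hence proper; and the $\cfrak_k$ are proper by construction. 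So every factor on the right lies in $I(\Or)$, and I may apply the quotient map $I(\Or) \to C(\Or)$ to the displayed identity.

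The final step is to observe that $[q_j\Or] = [\Or]$ is the identity element of $C(\Or)$ because $q_j\Or$ is a principal fractional ideal; consequently $[(q_j\Or)^{e_j/2}]$ is trivial for every $j$, and pushing the identity above into the quotient gives
\[
[\afrak] = [\pfrak_1] \cdots [\pfrak_r] \cdot [\cfrak_1] \cdots [\cfrak_t] \quad \text{in } C(\Or),
\]
which is exactly (ii). I do not expect a real obstacle here: all of the substance is packaged inside Theorem \ref{yosoCor}, and the only points requiring minor care are checking that every factor appearing is an invertible $\Or$-ideal so that the equation can legitimately be pushed into $C(\Or)$, and noting that the inert contributions $(q_j\Or)^{e_j/2}$ vanish in the class group.
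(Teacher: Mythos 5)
Your proposal is correct and follows exactly the same route as the paper: invoke Theorem \ref{yosoCor} to get the ideal factorization, then pass to $C(\Or)$ and use that the principal factors $(q_j\Or)^{e_j/2}$ become trivial. The only difference is that you explicitly verify each factor is invertible before passing to the class group, a point the paper leaves implicit.
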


\begin{proof}
Theorem \ref{yosoCor} implies that 
all $e_j$'s are even, and 
there exist prime $\Or$-ideals $\pfrak_i$'s, 
proper $\Or$-ideals $\cfrak_k$'s, 
where $N(\pfrak_i)=p_i$ and $N(\cfrak_k)={l_k}^{h_k}$ 
such that 
$\afrak=\pfrak_1\cdots\pfrak_r \cdot (q_1\Or)^{e_1/2}\cdots (q_s\Or)^{e_s/2} 
\cdot \cfrak_1 \cdots \cfrak_t$.
Thus we have
\begin{align*}
[\afrak]
&=[\pfrak_1]\cdots[\pfrak_r] \cdot [q_1\Or]^{e_1/2}\cdots [q_s\Or]^{e_s/2} 
\cdot [\cfrak_1] \cdots [\cfrak_t] \\
&=[\pfrak_1]\cdots[\pfrak_r] \cdot 1^{e_1/2}\cdots 1^{e_s/2}
\cdot [\cfrak_1] \cdots [\cfrak_t] \\
&=[\pfrak_1]\cdots[\pfrak_r]\cdot [\cfrak_1] \cdots [\cfrak_t] \;\; \text{in $C(\Or)$}. 
\end{align*}
This competes the proof.
\end{proof}

Theorem \ref{MainTheorem}-[(i)$\Rightarrow$(ii)] follows immediately by 
this lemma and Propositions \ref{existO} and \ref{C(D),C(O)}.

\section{Examples of the main theorem}\label{ExamplesoftheM}
Now we prove Example \ref{exa-4}. 

\begin{proof}[Proof of Example \rm{\ref{exa-4}}]
We see that the discriminant of $x^2+y^2$ is $D=-4$. 
Since $K=\Q(\sqrt{D})=\Q(\sqrt{-1})$, 
we have $f=\sqrt{D/d_K}=1$. 
Note that $C(-4)=\{[x^2+y^2]\}\simeq\{1\}$ (see Cox \cite[\S2-A]{Cox}), 
and for a prime $p$, 
the following holds.
\[
\text{$p=x^2+y^2$ has an integer solution 
$\iff$ 
$p=2$ or $p\equiv1 \bmod4$.}
\]
Since the group $C(-4)$ is trivial, 
we see that for positive definite forms $f_i(x,y)$'s of discriminant $D=-4$, 
\[
\text{$[x^2+y^2]=1\cdots1=[x^2+y^2]\cdots[x^2+y^2]=[f_1(x,y)]\cdots[f_r(x,y)]$ in $C(-4)$}.
\]
Then the assertion follows from Theorem \ref{MainTheorem}. 
\end{proof}

Next we see the following example, 
where the form class group $C(D)\neq\{1\}$ 
but the conductor $f=1$. 

\begin{Exa}\label{exa-20}
Let $m$ be an arbitrary positive integer. 
Write 
\[
m=p_1\cdots p_r \cdot {q_1}^{e_1}\cdots{q_s}^{e_s},
\]
where
\begin{itemize}
\item the $p_i$'s are primes with $p_i=2, 5$ or $p_i\equiv1, 3, 7, 9 \bmod20$, 
\item the $q_j$'s are distinct primes with $q_j\equiv11, 13, 17, 19 \bmod20$, 
\item $r, s\geq 0$, $e_j >0$.
\end{itemize}
Then, the followings are equivalent.
\begin{enumerate}[label=$(\mathrm{\roman*})$]
\item $m=2x^2+2xy+3y^2$ has an integer solution.
\item All $e_j$'s are even, and 
the number of primes $p_k$'s with 
$p_k=2$ or $p_k\equiv3, 7 \bmod20$ is odd.
\end{enumerate}
\end{Exa}
\begin{proof}
We see that the discriminant of $2x^2+2xy+3y^2$ is $D=-20$. 
Since $K=\Q(\sqrt{D})=\Q(\sqrt{-5})$, 
we have $f=\sqrt{D/d_K}=1$. 
Note that $C(-20)=\{[x^2+5y^2], [2x^2+2xy+3y^2]\}\simeq\{1, -1\}$ (see Cox \cite[\S2-A]{Cox}), 
and 
for a prime $p$, 
the followings hold.
\begin{align*}
\text{$p=x^2+5y^2$ has an integer solution} 
&\iff 
\text{$p=5$ or $p\equiv1, 9\bmod20$}. \\
\text{$p=2x^2+2xy+3y^2$ has an integer solution}
&\iff 
\text{$p=2$ or $p\equiv3, 7 \bmod20$}.
\end{align*}
Thus we see that for positive definite forms $f_i(x,y)$'s of discriminant $D=-20$, the following holds.
\begin{align*}
&\text{$[2x^2+2xy+3y^2]=[f_1(x,y)]\cdots[f_r(x,y)]$} \\
\iff 
&\text{The number of forms $f_k(x,y)$'s with $[f_k(x,y)]=[2x^2+2xy+3y^2]=-1$ is odd} \\
\iff 
&\text{The number of primes $p_k(x,y)$'s with $p_k=2$ or $p_k\equiv3, 7 \bmod20$ is odd}.
\end{align*}
Then the assertion follows from Theorem \ref{MainTheorem}. 
\end{proof}

Strictly speaking, since $f=1$, we can derive Example \ref{exa-20} from Theorem \ref{result0}, Propositions \ref{existO} and \ref{C(D),C(O)}. 

Now we see examples 
where 
$f>1$. 
Let $D$, $D'\equiv 0,1 \bmod4$ be negative integers 
satisfying 
$\sqrt{D/D'}\in\Z$. 
Suppose that $r=\sqrt{D/D'}$. 
By Propositions \ref{existO} and \ref{C(D),C(O)}, 
there exist orders $\Or$, $\Or'$ such that 
$\Or \subset\Or'$ and
\[
C(D) \simeq C(\Or), \;\; C(D') \simeq C(\Or').
\] 

\begin{Prop}\label{surj}
Let $C(\Or)$, $C(\Or')$ be as above. 
Consider the homomorphism 
\[
C(\Or) \rightarrow 
C(\Or'), \; [\afrak]\mapsto[\afrak\Or'].
\]
Then the homomorphism is surjective. 
\end{Prop}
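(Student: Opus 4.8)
The plan is to show surjectivity of the map $C(\Or)\to C(\Or')$, $[\afrak]\mapsto[\afrak\Or']$, by exhibiting, for an arbitrary proper $\Or'$-ideal $\afrak'$, a proper $\Or$-ideal whose extension to $\Or'$ lies in the class $[\afrak']$. The key observation is that every ideal class in $C(\Or')$ contains a representative that is prime to $r$, the relative conductor of $\Or$ in $\Or'$. This is a standard fact: given any proper $\Or'$-ideal $\afrak'$, one can find $\beta\in K$ so that $\beta\afrak'$ is an integral $\Or'$-ideal prime to $r$ — concretely, one uses the surjectivity of the natural map from ideals prime to $r$ onto the ideal class group, which follows from the fact that $\Or'$ (being an order in an imaginary quadratic field, hence Noetherian one-dimensional) has the property that in each class one can avoid any finite set of primes. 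If the paper has not already recorded this, I would insert a short lemma to that effect, or else phrase it in terms of the finitely many prime ideals of $\Or'$ dividing $r$ and a weak approximation / prime-avoidance argument.

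Once we have such a representative $\afrak'$ prime to $r$, the forward direction is essentially already done in Section \ref{Relative}: set $\afrak = \afrak'\cap\Or$. By Proposition \ref{sonaraproper}, $\afrak$ is a proper $\Or$-ideal, and by Lemma \ref{sonaraissyu} we have $(\afrak'\cap\Or)\Or' = \afrak'$, i.e. $\afrak\Or' = \afrak'$. Hence $[\afrak]\mapsto[\afrak']$, which is exactly the given class. This shows every class of $C(\Or')$ is hit, so the homomorphism is surjective.

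The one point requiring care — and the main obstacle — is justifying that every class in $C(\Or')$ has a representative prime to $r$. Everything else (properness of $\afrak'\cap\Or$, the identity $(\afrak'\cap\Or)\Or'=\afrak'$) is handed to us by the propositions of Section \ref{Relative}, and the fact that $[\afrak]\mapsto[\afrak\Or']$ is a well-defined homomorphism rests on Proposition \ref{seki} together with $\Or\subset\Or'$. So the body of the proof will read: take $[\afrak']\in C(\Or')$; replace $\afrak'$ by a representative prime to $r$ (justified by prime-avoidance in the order $\Or'$); put $\afrak = \afrak'\cap\Or$, which is proper by Proposition \ref{sonaraproper}; then $\afrak\Or' = \afrak'$ by Lemma \ref{sonaraissyu}, so $[\afrak']$ is in the image. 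This completes the proof.
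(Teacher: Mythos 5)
Your proposal is correct and follows essentially the same route as the paper: the paper also reduces to choosing a class representative prime to the relative conductor $r$ (quoting Cox's Corollary 7.17, which is exactly the ``prime-avoidance'' lemma you anticipated needing) and then applies Lemma \ref{sonaraissyu} to get $(\afrak'\cap\Or)\Or'=\afrak'$. Your write-up is in fact more explicit than the paper's, since you also record the use of Proposition \ref{sonaraproper} to ensure $\afrak'\cap\Or$ is proper, which the paper leaves implicit.
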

\begin{proof}
To prove the proposition, we prepare the following lemma: 
\begin{Lem}
Let $\Or$ be an order in an imaginary quadratic field. Given a nonzero integer $M$, then every ideal class in $C(\Or)$ contains a proper $\Or$-ideal whose norm is relatively prime to $M$.
\end{Lem}
\begin{proof}
See Cox \cite[Corollary 7.17]{Cox}.
\end{proof}
Proposition \ref{surj} follows immediately from the lemma and Proposition \ref{sonaraissyu}.
\end{proof}

Thus we can consider the surjection
\[
\pi \colon C(D) \simeq C(\Or) \twoheadrightarrow C(\Or') \simeq C(D').
\]
The above surjection $\pi$ is ``consistent'' with integers represented by forms; if a form class in $C(D)$ represents an integer, then its image in $C(D')$ also represents the same integer (see Proposition \ref{arawasu}), 
although the opposite side  is a naive problem (see Propositions \ref{zuretaraarawasu} and \ref{zuretaraarawasanai}).

\begin{Prop}\label{arawasu}
Let $C(D)$, $C(D')$ be as above. 
Assume that 
$\pi \colon[f(x,y)] \mapsto [g(x,y)]$. 
If $f(x,y)$ represents an integer $m$, 
then $g(x,y)$ represents the integer $m$.
\end{Prop}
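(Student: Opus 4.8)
The plan is to translate the statement about forms into the language of ideals, where the surjection $\pi$ is given by the explicit map $[\afrak]\mapsto[\afrak\Or']$, and then use the norm-preservation results already established in Sections \ref{Orders} and \ref{Relative}. Concretely, suppose $\pi\colon[f(x,y)]\mapsto[g(x,y)]$, and let $\afrak$ be a proper $\Or$-ideal corresponding to $[f(x,y)]$ under the isomorphism $C(D)\simeq C(\Or)$ of Proposition \ref{C(D),C(O)}; then by construction of $\pi$ the ideal $\afrak\Or'$ corresponds to $[g(x,y)]$ under $C(D')\simeq C(\Or')$.

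The key steps, in order, would be: (1) By Proposition \ref{C(D),C(O)}(iii), since $f(x,y)$ represents $m$, there is a proper $\Or$-ideal $\bfrak$ in the class $[\afrak]$ with $N(\bfrak)=m$. (2) By Proposition \ref{proper}, $\bfrak\Or'$ is a proper $\Or'$-ideal, and by Proposition \ref{norm}, $N(\bfrak\Or')=N(\bfrak)=m$. (3) Since $[\bfrak]=[\afrak]$ in $C(\Or)$, applying the homomorphism $C(\Or)\to C(\Or')$, $[\cfrak]\mapsto[\cfrak\Or']$ gives $[\bfrak\Or']=[\afrak\Or']$ in $C(\Or')$; and $[\afrak\Or']$ corresponds to $[g(x,y)]$. (4) Hence $\bfrak\Or'$ is a proper $\Or'$-ideal of norm $m$ lying in the ideal class corresponding to $[g(x,y)]$, so by Proposition \ref{C(D),C(O)}(iii) again, $g(x,y)$ represents $m$.

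There is essentially no serious obstacle here: every ingredient is already in place. The one point that requires a little care is checking that the map $[\afrak]\mapsto[\afrak\Or']$ is well-defined on ideal classes — i.e. that it sends principal ideals to principal ideals — which follows because for $\alpha\in K^\times$ we have $(\alpha\Or)\Or'=\alpha\Or'$; and that under the identifications $C(D)\simeq C(\Or)$, $C(D')\simeq C(\Or')$ this map is precisely the $\pi$ introduced above, which is how $\pi$ was defined. So the proof is really just assembling Propositions \ref{C(D),C(O)}, \ref{proper}, and \ref{norm}.

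\begin{proof}
Under the isomorphisms $C(D)\simeq C(\Or)$ and $C(D')\simeq C(\Or')$ of Proposition \ref{C(D),C(O)}, the map $\pi$ is the homomorphism $[\afrak]\mapsto[\afrak\Or']$. Let $\afrak$ be a proper $\Or$-ideal in the class corresponding to $[f(x,y)]$, so that $\afrak\Or'$ lies in the class corresponding to $[g(x,y)]$. Since $f(x,y)$ represents $m$, Proposition \ref{C(D),C(O)}(iii) provides a proper $\Or$-ideal $\bfrak$ with $[\bfrak]=[\afrak]$ in $C(\Or)$ and $N(\bfrak)=m$. By Proposition \ref{proper}, $\bfrak\Or'$ is a proper $\Or'$-ideal, and by Proposition \ref{norm}, $N(\bfrak\Or')=N(\bfrak)=m$. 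Applying the homomorphism $[\cfrak]\mapsto[\cfrak\Or']$ to $[\bfrak]=[\afrak]$ gives $[\bfrak\Or']=[\afrak\Or']$ in $C(\Or')$, so $\bfrak\Or'$ lies in the class corresponding to $[g(x,y)]$. Hence by Proposition \ref{C(D),C(O)}(iii), $g(x,y)$ represents $m$.
\end{proof}
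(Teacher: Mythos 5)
Your proof is correct and is exactly the argument the paper compresses into its one-line citation of Propositions \ref{existO}, \ref{C(D),C(O)}, and \ref{norm}: translate representation into norms of ideals in the corresponding class, push the ideal up to $\Or'$, and use that extension preserves properness and norm. No substantive difference in approach.
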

\begin{proof}
This follows immediately from 
Propositions \ref{existO}, \ref{C(D),C(O)}, and \ref{norm}.
\end{proof}

\begin{Lem}\label{sento}
A form $f(x,y)$ properly represents an integer m if and only if $f(x,y)$ is properly equivalent to the form $mx^2+bxy+cy^2$ for some b,c $\in \Z$.
\end{Lem}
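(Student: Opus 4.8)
The plan is to prove both directions of the equivalence. For the forward direction, suppose $f(x,y)$ properly represents $m$, so there are relatively prime integers $p, r$ with $f(p,r)=m$. Since $\gcd(p,r)=1$, we can find integers $q, s$ with $ps-qr=1$, so that $\begin{pmatrix} p & q \\ r & s \end{pmatrix} \in \mathrm{SL}(2,\Z)$. First I would substitute $x \mapsto px+qy$, $y \mapsto rx+sy$ into $f$ and observe that the resulting form $g(x,y)=f(px+qy,rx+sy)$ is properly equivalent to $f$ by definition. The key point is that the coefficient of $x^2$ in $g$ is exactly $f(p,r)=m$, so $g$ has the shape $mx^2+b'xy+c'y^2$ for some $b', c' \in \Z$; this is a direct computation of the $x^2$-coefficient, requiring no cleverness.

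For the converse, suppose $f(x,y)$ is properly equivalent to $h(x,y)=mx^2+bxy+cy^2$ for some $b,c \in \Z$. Then $h$ properly represents $m$, since $h(1,0)=m$ and $\gcd(1,0)=1$. Because proper equivalence preserves proper representations — a fact recalled in Section \ref{Themainth} — it follows that $f(x,y)$ also properly represents $m$. This direction is essentially immediate once the preservation statement is invoked.

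There is no real obstacle here; the only thing to be a little careful about is the bookkeeping in the forward direction, namely checking that the substitution genuinely lands in $\mathrm{SL}(2,\Z)$ (which is why one wants $ps-qr=+1$, arrangeable after possibly replacing $(q,s)$ by $(-q,-s)$ or adjusting signs) and that the $x^2$-coefficient of the transformed form is indeed $ap^2+bpr+cr^2=f(p,r)=m$. I would also remark that this lemma is classical (see Cox \cite[Lemma 2.3]{Cox}), so one could alternatively just cite it, but the self-contained argument above is short enough to include in full.
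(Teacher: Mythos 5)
Your proof is correct and is exactly the classical argument behind this lemma: extend the coprime pair $(p,r)$ to a matrix in $\mathrm{SL}(2,\Z)$ and read off the $x^2$-coefficient $f(p,r)=m$, with the converse following from $h(1,0)=m$ and invariance of proper representation under proper equivalence. The paper itself gives no argument and simply cites Cox \cite[Lem.~2.3]{Cox}, whose proof is the same one you wrote out, so your self-contained version is a faithful expansion of the cited source.
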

\begin{proof}
See Cox \cite[Lem.2.3]{Cox}.
\end{proof}

\begin{Lem}\label{hensuhenkan}
Let $C(D)$, $C(D')$ be as above. 
Assume that 
$\pi \colon[f(x,y)] \mapsto [g(x,y)]$. 
Then there is an element 
$
\begin{pmatrix}
 s & t \\
 u & v 
\end{pmatrix}
$
$\in \rm{GL}(2,\Z)$ such that
\begin{equation}
f(x,y)=g(sx+ty,ux+vy), \quad
\mathrm{det}
\begin{pmatrix}
 s & t \\
 u & v 
\end{pmatrix}
=\pm r. \label{1}
\end{equation}
\end{Lem}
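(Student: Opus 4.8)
The plan is to make the correspondence between forms of discriminant $D$ and proper ideals of the order $\Or$ of discriminant $D$ (Proposition \ref{C(D),C(O)}) totally explicit, and then to translate the ideal-theoretic relation $\afrak = \afrak\Or'$ back into a change of variables between the associated forms. First I would recall from Proposition \ref{C(D),C(O)}(i) that if $f(x,y) = ax^2+bxy+cy^2$ then $\afrak = \langle a, (-b+\sqrt{D})/2\rangle_\Z$ is a proper $\Or$-ideal, and that the quadratic form recovered from a basis $\{\alpha,\beta\}$ of a proper ideal (oriented suitably) is, up to a positive scalar, $N(x\alpha + y\beta)/N(\afrak)$. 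Since $\pi([f]) = [g]$ by hypothesis, with $g(x,y) = a'x^2+b'xy+c'y^2$ and $\bfrak = \langle a', (-b'+\sqrt{D'})/2\rangle_\Z$, the definition of $\pi$ via Proposition \ref{surj} gives $[\afrak\Or'] = [\bfrak]$ in $C(\Or')$, so after multiplying $f$ (resp.\ $g$) by a suitable integer matrix in $\mathrm{SL}(2,\Z)$ — which is harmless because we are only asked for $\mathrm{GL}(2,\Z)$ equivalence with determinant $\pm r$, and such matrices have determinant $1$ — I may assume outright that $\afrak\Or' = \bfrak$ as fractional $\Or'$-ideals (not merely up to principal ideals).

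Next I would compare the two $\Z$-lattices $\afrak \subset \bfrak$ inside $K$. Since $\Or \subset \Or'$ with index $r$, Proposition \ref{norm} gives $N(\afrak) = N(\afrak\Or') = N(\bfrak)$, and the lattice index $[\bfrak : \afrak]$ equals $[\afrak\Or' : \afrak]$. The key local computation is that $\afrak\Or'/\afrak \cong \Or'/\Or$ as abelian groups when $\afrak$ is a proper $\Or$-ideal prime to $r$ (this is exactly the kind of fact underlying Proposition \ref{sonaranorm} run in the reverse direction, or can be extracted from the isomorphism $I(\Or, r) \simeq I(\Or', r)$), so the index is $r$. Therefore, writing $\afrak = \langle \alpha_1, \alpha_2\rangle_\Z$ and $\bfrak = \langle \beta_1, \beta_2\rangle_\Z$ for the oriented bases corresponding to $f$ and $g$, there is a matrix $M = \begin{pmatrix} s & t \\ u & v\end{pmatrix} \in M_2(\Z)$ expressing the $\alpha$'s in terms of the $\beta$'s, with $|\det M| = [\bfrak : \afrak] = r$, and after possibly adjusting signs we arrange $\det M = \pm r$. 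Finally, evaluating the form $N(x\beta_1 + y\beta_2)/N(\bfrak)$ on the sublattice gives $N((sx+ty)\beta_1 + (ux+vy)\beta_2)/N(\bfrak) = N(x\alpha_1 + y\alpha_2)/N(\afrak)$, which is precisely $f(x,y) = g(sx+ty, ux+vy)$; this is equation \eqref{1}.

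The main obstacle I expect is bookkeeping around orientations and the reduction to the case $\afrak\Or' = \bfrak$ on the nose: the form-ideal dictionary in Proposition \ref{C(D),C(O)} is only stated up to proper equivalence, so I need to be careful that when I replace $[f]$ by a properly equivalent form to realize $\afrak\Or' = \bfrak$ exactly, the resulting change of variables composes correctly with an $\mathrm{SL}(2,\Z)$ matrix and still yields total determinant $\pm r$ (this is automatic since $\mathrm{SL}(2,\Z)$ matrices contribute determinant $1$). A secondary technical point is confirming $[\bfrak : \afrak] = r$ rather than some divisor of $r$; I would handle this by first using the earlier-established lemma (in the proof of Proposition \ref{surj}) that every class in $C(\Or)$ contains an ideal prime to $r$, reducing to the case $\afrak$ is prime to $r$, where $\afrak\Or'/\afrak \cong \Or'/\Or$ has order exactly $r$. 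With those reductions in hand the rest is the routine linear-algebra translation sketched above.
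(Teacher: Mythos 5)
Your proof is essentially correct, but it follows a genuinely different route from the paper's. The paper's proof is representation-theoretic: it invokes the Chebotarev density theorem to produce a prime $p\nmid D'$ (or a prime square) represented by $f(x,y)$, uses Lemma \ref{sento} to normalize $f\sim px^2+bxy+cy^2$ and $g\sim px^2+Bxy+Cy^2$, compares the ideals $\langle p,(-b+\sqrt{D})/2\rangle_\Z\Or'$ and $\langle p,(-B\pm\sqrt{D'})/2\rangle_\Z$ to extract the relations $b=-2sp+tB$ and $t=\pm r$, and then exhibits the explicit matrix $\bigl(\begin{smallmatrix}1&-s\\0&t\end{smallmatrix}\bigr)$. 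Your argument instead compares the lattices $\afrak\subset\afrak\Or'$ directly: since $N(\afrak)=N(\afrak\Or')$ by Proposition \ref{norm}, computing $[\Or':\afrak]$ in two ways gives $[\afrak\Or':\afrak]=[\Or':\Or]=r$, so the basis-change matrix between positively oriented bases has determinant $\pm r$, and the identity $f(x,y)=N(x\alpha_1+y\alpha_2)/N(\afrak)$ transported along this inclusion yields \eqref{1}. This buys you two things: you avoid the analytic input (Chebotarev) entirely, and you avoid the unproved ``prime square'' case that the paper dismisses with ``we can prove similarly''; the cost is that you must make explicit the inverse of the dictionary in Proposition \ref{C(D),C(O)} (the norm form of an oriented ideal basis), which the paper only cites from Cox. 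Two points should be tightened. First, your reduction ``I may assume $\afrak\Or'=\bfrak$ on the nose'' is not quite achievable by replacing $g$ within its proper equivalence class, since not every ideal in the class has the shape $\langle a',(-b'+\sqrt{D'})/2\rangle_\Z$; the clean fix is to work with the form attached to a positively oriented basis of $\afrak\Or'$ itself, observe that it is properly equivalent to $g$, and absorb that discrepancy into an $\mathrm{SL}(2,\Z)$ factor exactly as you describe. Second, your detour through ideals prime to $r$ and the isomorphism $\afrak\Or'/\afrak\simeq\Or'/\Or$ is unnecessary: the index count above needs only Proposition \ref{norm} and multiplicativity of lattice indices, and holds for every proper $\Or$-ideal $\afrak$.
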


\begin{proof}
By Chebotarev Density Theorem, $f(x,y)$ represents 
a prime $p$ with $p\nmid D'$, 
or a prime square $q^2$ with $q\nmid D'$. 

When $f(x,y)$ represents a prime $p$, 
we claim that $(\ref{1})$ holds. 
By Lemma \ref{sento}, we can assume that 
\begin{equation}
\text{$[f(x,y)]=[px^2+bxy+cy^2]$, where $b,c\in\Z$}.\label{2}
\end{equation} 
Proposition \ref{arawasu} implies that $[g(x,y)]$ represents $p$. 
By Lemma \ref{sento}, we can assume that 
\begin{equation}
\text{$[g(x,y)]=[px^2+Bxy+Cy^2]$, where $B,C\in\Z$}. \label{3}
\end{equation}
By Proposition \ref{C(D),C(O)}, we have
\[
C(D) \ni [px^2+bxy+cy^2] \mapsto [\langle p,(-b+\sqrt{D})/2 \rangle_\Z] \in C(\Or), 
\]
\[
C(D') \ni [px^2+Bxy+Cy^2] \mapsto [\langle p,(-B+\sqrt{D'})/2 \rangle_\Z] \in C(\Or').
\]
By Proposition \ref{surj}, we get
\[
C(\Or) \ni [\langle p,(-b+\sqrt{D})/2 \rangle_\Z] 
\mapsto    [\langle p,(-b+\sqrt{D})/2 \rangle_\Z \Or'] 
                =[\langle p,(-B+\sqrt{D'})/2 \rangle_\Z] \in C(\Or').
\]
Since there are only two $\Or'$-ideals whose norm is $p$, 
\[
\langle p,(-b+\sqrt{D})/2 \rangle_\Z \Or' 
= \langle p,(-B \pm\sqrt{D'})/2 \rangle_\Z.
\]
Hence there exist $s$, $t\in\Z$ such that 
\[
(-b+\sqrt{D})/2 = s p + t (-B\pm\sqrt{D'})/2.
\]
Since $D=r^2 D'$, we get
\[
-b+r\sqrt{D'} = 2s p - t B \pm t\sqrt{D'}.
\]
Hence we have 
$b=-2sp+tB$ and $\pm r=t$. 
Now we can write
\begin{align*}
&p(x-sy)^2+B(x-sy)(ty)+C(ty)^2 \\
=&
\left(\begin{array}{cc} x-sy&ty\\\end{array} \right)
\left(\begin{array}{cc} p&B/2\\ B/2&C\\\end{array} \right)
\left(\begin{array}{cc} x-sy\\ ty\\\end{array} \right) \\
=&
\left(\begin{array}{cc} x&y\\\end{array} \right)
\left(\begin{array}{cc} 1&0\\ -s&t\\\end{array} \right)
\left(\begin{array}{cc} p&B/2\\ B/2&C\\\end{array} \right)
\left(\begin{array}{cc} 1&-s\\ 0&t\\\end{array} \right)
\left(\begin{array}{cc} x\\ y\\\end{array} \right) \\
=&
\left(\begin{array}{cc} x&y\\\end{array} \right)
\left(\begin{array}{cc} p&-sp+t(B/2)\\ -sp+t(B/2)&ps^2-Bst+Ct^2\\\end{array} \right)
\left(\begin{array}{cc} x\\ y\\\end{array} \right) \\
=&
px^2+(-2sp+tB)xy+(ps^2-Bst+Ct^2)y^2 \\
=&
px^2+bxy+(ps^2-Bst+Ct^2)y^2.
\end{align*}
The discriminant of $px^2+bxy+(ps^2-Bst+Ct^2)y^2$ 
is 
\begin{align*}
-4\cdot
\mathrm{det}\Bigl\{
\left(\begin{array}{cc} 1&0\\ -s&t\\\end{array} \right)
\left(\begin{array}{cc} p&B/2\\ B/2&C\\\end{array} \right)
\left(\begin{array}{cc} 1&-s\\ 0&t\\\end{array} \right) \Bigr\}
&= t^2(B^2-4pC) \\
&= r^2 D' \\
&=D.
\end{align*}
Since the discriminant of $px^2+bxy+cy^2$ is also $D$, we see that 
\[
px^2+bxy+(ps^2-Bst+Ct^2)y^2 
= px^2+bxy+cy^2.\\
\]
Therefore we see that 
\begin{align}
&p(x-sy)^2+B(x-sy)(ty)+C(ty)^2 \notag\\
=&
px^2+bxy+cy^2.
\label{4}
\end{align}
Furthermore we have
\begin{equation}
\mathrm{det}
\begin{pmatrix}
 1 & -s \\
 0 & t 
\end{pmatrix}
=t
=\pm r.\label{5}
\end{equation}
Thus 
the claim is proved by $(\ref{2}), (\ref{3}), (\ref{4})$ and $(\ref{5})$. 

When $f(x,y)$ represents a prime square $q^2$, 
we can prove similarly. 
\end{proof}

\begin{Prop}\label{zuretaraarawasu}
Let $C(D)$, $C(D')$ be as above. 
Assume that $r=\sqrt{D/D'}=l^{\lambda}$, 
where $\lambda>0$ and 
$l$ is a prime, 
and assume that 
$\pi \colon [f(x,y)] \mapsto [g(x,y)]$. 
If $g(x,y)$ represents $l^h$ where $h\geq0$, 
then $f(x,y)$ represents $l^{2\lambda+h}$.
\end{Prop}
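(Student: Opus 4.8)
The plan is to combine the homogeneity of the quadratic form $g$ with the explicit change of variables furnished by Lemma \ref{hensuhenkan}. The key observation is that although the substitution relating $f$ and $g$ has determinant only $\pm r=\pm l^{\lambda}$, a representation of $l^{h}$ by $g$ can first be scaled up by $l^{\lambda}$, so that the resulting representation of $l^{2\lambda+h}$ pulls back to an \emph{integral} point for $f$.

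Concretely, I would argue as follows. Since $\pi\colon [f(x,y)]\mapsto[g(x,y)]$, Lemma \ref{hensuhenkan} provides a matrix $\begin{pmatrix} s & t \\ u & v \end{pmatrix}\in\mathrm{GL}(2,\Z)$ with
\[
f(x,y)=g(sx+ty,ux+vy),\qquad \det\begin{pmatrix} s & t \\ u & v \end{pmatrix}=\pm r=\pm l^{\lambda}.
\]
Because $g(x,y)$ represents $l^{h}$, pick $x_0,y_0\in\Z$ with $g(x_0,y_0)=l^{h}$; by homogeneity of $g$ (equivalently, by Lemma \ref{mn^2} applied with $n=l^{\lambda}$) we get $g(l^{\lambda}x_0,l^{\lambda}y_0)=l^{2\lambda}g(x_0,y_0)=l^{2\lambda+h}$. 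Now set
\[
\begin{pmatrix} x \\ y \end{pmatrix}=\begin{pmatrix} s & t \\ u & v \end{pmatrix}^{-1}\begin{pmatrix} l^{\lambda}x_0 \\ l^{\lambda}y_0 \end{pmatrix}=\pm\begin{pmatrix} v & -t \\ -u & s \end{pmatrix}\begin{pmatrix} x_0 \\ y_0 \end{pmatrix},
\]
where the last equality holds because the factor $l^{\lambda}$ on the right cancels the determinant $\pm l^{\lambda}$ occurring in the inverse matrix. In particular $x,y\in\Z$, and by construction $sx+ty=l^{\lambda}x_0$ and $ux+vy=l^{\lambda}y_0$, so
\[
f(x,y)=g(sx+ty,ux+vy)=g(l^{\lambda}x_0,l^{\lambda}y_0)=l^{2\lambda+h},
\]
which shows that $f(x,y)$ represents $l^{2\lambda+h}$.

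I do not foresee a serious obstacle here; the argument is essentially a one–line linear-algebra cancellation once Lemma \ref{hensuhenkan} is in hand. The one point that must be handled with care is that Lemma \ref{hensuhenkan} genuinely yields a $\mathrm{GL}(2,\Z)$ matrix whose determinant is exactly $\pm r=\pm l^{\lambda}$: this is precisely what makes the adjugate-divided-by-determinant computation land in $\Z$ after scaling by $l^{\lambda}$. This is also the place where the hypothesis that $r$ is a prime power $l^{\lambda}$ — rather than an arbitrary integer — is used, since it guarantees that $r^{2}\cdot l^{h}=l^{2\lambda+h}$ is again a power of $l$.
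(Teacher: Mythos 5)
Your proposal is correct and follows essentially the same route as the paper: both invoke Lemma \ref{hensuhenkan}, scale the representation of $l^{h}$ by $l^{\lambda}$ using homogeneity, and use the adjugate of the change-of-variables matrix (whose determinant $\pm l^{\lambda}$ cancels the scaling factor) to pull back to an integral representation of $l^{2\lambda+h}$ by $f$. The paper simply writes the adjugate substitution $f(vx-ty,-ux+sy)=g(\pm l^{\lambda}x,\pm l^{\lambda}y)$ directly rather than deriving it from the inverse matrix, but the content is identical.
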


\begin{proof}
By Lemma \ref{hensuhenkan}, 
there is an element 
$
\begin{pmatrix}
 s & t \\
 u & v 
\end{pmatrix}
$
$\in \rm{GL}(2,\Z)$ such that
\begin{equation*}
f(x,y)=g(sx+ty,ux+vy), \quad
\mathrm{det}
\begin{pmatrix}
 s & t \\
 u & v 
\end{pmatrix}
=\pm l^\lambda.
\end{equation*}
Thus we see that 
\begin{equation}
f(vx-ty,-ux+sy)=g(\pm l^\lambda x,\pm l^\lambda y). \label{6}
\end{equation}
Assume that 
$g(x,y)$ represents $l^h$. 
Namely 
there exist $X$, $Y\in\Z$ such that $g(X,Y)=l^h$. 
Hence we have 
$
g(\pm l^\lambda X,\pm l^\lambda Y)
=l^{2\lambda} \cdot g(X,Y)=l^{2\lambda+h}$. 
Thus (\ref{6}) implies that 
\[
f(vX-tY,-uX+sY)=l^{2\lambda+h}.
\]
Therefore we see that $f(x,y)$ represents $l^{2\lambda+h}$.
\end{proof}


\begin{Prop}\label{zuretaraarawasanai}
Let $C(D)$, $C(D')$ be as above. 
Assume that $r=\sqrt{D/D'}=l^{\lambda}$, 
where $\lambda>0$ and 
$l$ is a prime. 
Let $[f(x,y)]\in C(D)$ be a form class, then 
$f(x,y)$ does not represent $l^{h}$ 
for any odd integer $1\leq h<2\lambda+1$. 
\end{Prop}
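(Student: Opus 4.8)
The plan is to argue by contradiction: first reduce to a \emph{proper} representation, then use Lemma~\ref{sento} to replace the form by a normalized one $l^{h}x^{2}+bxy+cy^{2}$ of discriminant $D$, and finally derive a contradiction by examining $l$-adic valuations, where at the very end the integrality condition $D'\equiv 0,1\bmod 4$ is what closes the argument.

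First I would reduce to proper representation. Suppose $f(x,y)$ represents $l^{h}$ with $h$ odd and $1\le h\le 2\lambda-1$, say $l^{h}=f(x_{0},y_{0})$, and set $d=\gcd(x_{0},y_{0})$. Then $l^{h}=d^{2}\,f(x_{0}/d,\,y_{0}/d)$, where $f(x_{0}/d,y_{0}/d)$ is a positive integer since $f$ is positive definite, so $d^{2}\mid l^{h}$ and $d=l^{k}$ with $2k\le h$. Hence $f$ properly represents $l^{h-2k}$, and $h-2k$ is odd (so $\ge 1$) and $\le 2\lambda-1$. Thus it suffices to show that no primitive positive definite form of discriminant $D$ properly represents $l^{h}$ for $h$ odd with $1\le h\le 2\lambda-1$. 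By Lemma~\ref{sento}, such a form would be properly equivalent to $l^{h}x^{2}+bxy+cy^{2}$ for some $b,c\in\Z$, and this form is again primitive, positive definite, and of discriminant $D=b^{2}-4l^{h}c$.

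Now I would run the valuation analysis. Writing $D=r^{2}D'=l^{2\lambda}D'$ and using $2\lambda-h\ge 1$ gives the identity
\[
b^{2}=D+4l^{h}c=l^{h}\bigl(l^{2\lambda-h}D'+4c\bigr).
\]
Since $h\ge 1$, primitivity forces $l\nmid\gcd(b,c)$. If $l\nmid b$, the left side has $l$-adic valuation $0$ while the right side has valuation $\ge h\ge 1$ — impossible; so $l\mid b$, whence primitivity forces $l\nmid c$, and in particular $v_{l}(b^{2})$ is even. If $l$ is odd, then $l^{2\lambda-h}D'+4c\equiv 4c\not\equiv 0\bmod l$, so $v_{l}(b^{2})=h$, which is odd — contradiction. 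If $l=2$, then $c$ is odd; writing $E=2^{2\lambda-h}D'+4c$ we have $v_{2}(b^{2})=h+v_{2}(E)$, so $v_{2}(E)$ must be odd, while $v_{2}(4c)=2$. If $2\lambda-h\ge 3$ then $v_{2}(2^{2\lambda-h}D')\ge 3$, forcing $v_{2}(E)=2$ — contradiction. If $2\lambda-h=1$ (so $h=2\lambda-1$), then $v_{2}(E)=v_{2}(2D'+4c)$ is odd only when $D'$ is odd with $v_{2}(E)=1$; this forces $v_{2}(b)=\lambda$, and writing $b=2^{\lambda}b_{1}$ with $b_{1}$ odd yields $D'=D/2^{2\lambda}=b_{1}^{2}-2c\equiv 3\bmod 4$, contradicting $D'\equiv 0,1\bmod 4$. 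In every case we reach a contradiction, so no such form exists; combined with the first reduction, $f(x,y)$ represents no $l^{h}$ with $h$ odd, $1\le h<2\lambda+1$.

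The step I expect to be the genuine obstacle is the prime $l=2$: there the crude valuation bookkeeping is inconclusive in the boundary case $h=2\lambda-1$, and one is forced to bring in the integrality condition $D'\equiv 0,1\bmod 4$ on the target discriminant through the explicit formula $D'=b_{1}^{2}-2c$. (A more structural, though less self-contained, alternative would be: a proper $\Or$-ideal of $l$-power norm is locally principal at $l$, and a local analysis of $\Or$ at $l$ — whose conductor has $l$-adic valuation $\mu\ge\lambda$ — shows that the $l$-adic valuation of the norm of any element is even or $\ge 2\mu$, which via Proposition~\ref{C(D),C(O)} again excludes odd $h\le 2\lambda-1$.)
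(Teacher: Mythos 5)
Your argument is correct and follows essentially the same route as the paper's own proof: reduce to a proper representation, normalize via Lemma~\ref{sento} to $l^{h}x^{2}+bxy+cy^{2}$ of discriminant $D=l^{2\lambda}D'$, and extract a contradiction from the $l$-divisibility of $b^{2}-4l^{h}c=l^{2\lambda}D'$, with the hypothesis $D'\equiv 0,1\bmod 4$ doing the decisive work at $l=2$. The only cosmetic difference is that the paper phrases the endgame as ``the normalized form fails to be primitive'' (showing $l\mid c$ as well as $l\mid b$) while you keep primitivity as a standing hypothesis and contradict the parity of $v_{l}(b^{2})$; the underlying computation is the same.
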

\begin{proof}
Let $h$ be an odd integer with $1\leq h<2\lambda+1$, 
and 
assume that $f(x,y)$ represents $l^h$. 

If $l^h$ is not properly represented by $f(x,y)$, 
there exists 
another 
odd integer $h'$ with 
$1\leq h' < h$
such that 
$l^{h'}$ is properly represented by $f(x,y)$
. 
Thus we may assume that $l^h$ is properly represented by $f(x,y)$. 
By Lemma \ref{sento}, 
we can assume that $[f(x,y)]=[l^h x^2+bxy+cy^2]$ 
where $b, c\in\Z$. 
Thus 
\[
b^2-4 l^h c=D=(l^{\lambda})^2 D'.
\]
Now we suppose that 
$h=2j+1$ where $0\leq j <\lambda$. Then we have
\[
b^2-4 l^{2j+1} c = l^{2\lambda} D'.
\]
Since $j<\lambda$, we see $2j+1 < 2\lambda$. 
It follows that $b$ can be written as
\[
\text{$b=l^{j+1}b'$, where $b'\in\Z$.}
\] 
Thus we get
\[
l^{2j+2}{b'}^2-4 l^{2j+1} c = l^{2\lambda} D'.
\]

When $l$ is odd, 
we see $l|c$. 
Hence $l^h x^2+bxy+cy^2$ is not primitive. 
However, any form properly equivalent to a primitive form is itself primitive, 
this is a contradiction. 

When $l=2$, we see 
\[
2^{2j+2}b'^2-2^{2j+3}c=l^{2\lambda}D'.
\]
Thus we have
\[
b'^2-2c=2^{2(\lambda-j-1)}D'.
\]
Now we claim that $c$ is even. If $c$ is odd, we have 
$c\equiv\pm1 \bmod4$. Hence
\[
b'^2\pm2 \equiv 2^{2(\lambda-j-1)}D' \bmod 4.
\]
Since $b'^2\equiv0, 1 \bmod4$, $2^{2(\lambda-j-1)}\equiv0, 1 \bmod4$ 
and $D'\equiv0, 1 \bmod4$, 
we see a contradiction. 
Thus the claim is proved. 
Hence $l^h x^2+bxy+cy^2$ is not primitive. 

Therefore the proposition is proved.
\end{proof}

Now we see the following 
examples of the main theorem in the case $f>1$. 
First we prove Example \ref{exa-32}. 

\begin{proof}[Proof of Example \rm{\ref{exa-32}}]
We see that the discriminant of $3x^2+2xy+3y^2$ is $D=-32$. 
Since $K=\Q(\sqrt{D})=\Q(\sqrt{-2})$, 
we have $f=\sqrt{D/d_K}=2$. 
Note that $C(-32)=\{[x^2+8y^2], [3x^2+2xy+3y^2]\}\simeq\{1, -1\}$, and 
for a prime $p$, 
the followings hold.
\begin{align*}
\text{$p=x^2+8y^2$ has an integer solution} 
&\iff 
p\equiv1 \bmod8. \\
\text{$p=3x^2+2xy+3y^2$ has an integer solution} 
&\iff 
p\equiv3 \bmod8. 
\end{align*}
Now consider the surjection
\begin{align*}
&C(-32)\twoheadrightarrow C(-8), \;\; \\
&\text{$[x^2+8y^2]\mapsto[x^2+2y^2]$}, \\
&\text{$[3x^2+2xy+3y^2]\mapsto[x^2+2y^2]$.}
\end{align*}
We easily see that 
$x^2+2y^2$ represents $2^0$, $2^1$. 
By Proposition \ref{zuretaraarawasu}, 
it follows that both $x^2+8y^2$ and $3x^2+2xy+3y^2$ represent $2^2$, $2^3$. 
By Proposition \ref{zuretaraarawasanai}, 
it follows that both $x^2+8y^2$ and $3x^2+2xy+3y^2$ do not represent $2^1$. 
Then the assertion follows from Theorem \ref{MainTheorem}. 
\end{proof}

Next we see the following example. 

\begin{Exa}
Let $m$ be an arbitrary positive integer. 
Write
\[
m=p_1\cdots p_r \cdot {q_1}^{e_1}\cdots{q_s}^{e_s} \cdot {2}^{h}
\]
where
\begin{itemize}
\item the $p_i$'s are primes with $p_i\equiv1, 5 \bmod8$, 
\item the $q_j$'s are distinct primes with $q_j\equiv3, 7 \bmod8$, 
\item $r, s\geq 0$, $e_j >0$, $h \geq0$. 
\end{itemize}
Then, the followings are equivalent.
\begin{enumerate}[label=$(\mathrm{\roman*})$]
\item $m=4x^2+4xy+5y^2$ has an integer solution.
\item All $e_j$'s are even, and 
one of the following holds.
\begin{enumerate}[label=$(\mathrm{\alph*})$]
\item the number of  primes $p_k$'s with $p_k\equiv5 \bmod8$ is odd, 
$h=0$, 
\item $h=2$,
\item $h\geq4$.
\end{enumerate}
\end{enumerate}
\end{Exa}

\begin{proof}
We see that the discriminant of $4x^2+4xy+5y^2$ is $D=-64$. 
Since $K=\Q(\sqrt{D})=\Q(\sqrt{-1})$, 
we have $f=\sqrt{D/d_K}=2^2$. 
Note that $C(-64)=\{[x^2+16y^2], [4x^2+4xy+5y^2]\}\simeq\{1, -1\}$, and 
for a prime $p$, 
the followings hold.
\begin{align*}
\text{$p=x^2+16y^2$ has an integer solution} 
&\iff 
p\equiv1 \bmod8. \\
\text{$p=4x^2+4xy+5y^2$ has an integer solution} 
&\iff 
p\equiv5 \bmod8. 
\end{align*}
Now consider the surjection
\begin{align*}
&C(-64)\twoheadrightarrow C(-4), \;\; \\
&\text{$[x^2+16y^2]\mapsto[x^2+y^2]$}, \\
&\text{$[4x^2+4xy+5y^2]\mapsto[x^2+y^2]$.}
\end{align*}
We easily see that 
$x^2+y^2$ represents $2^0$, $2^1$. 
By Proposition \ref{zuretaraarawasu}, 
it follows that both $x^2+16y^2$ and $4x^2+4xy+5y^2$ represent $2^4$, $2^5$. 
By Proposition \ref{zuretaraarawasanai}, 
it follows that both $x^2+16y^2$ and $4x^2+4xy+5y^2$ do not represent $2^1$, $2^3$. 

Furthermore consider the surjection
\begin{align*}
&C(-64)\twoheadrightarrow C(-16), \;\; \\
&[x^2+16y^2]\mapsto[x^2+4y^2], \\
&[4x^2+4xy+5y^2]\mapsto[x^2+4y^2].
\end{align*}
We easily see that 
$x^2+4y^2$ represents $2^0$. 
By Proposition \ref{zuretaraarawasu}, 
it follows that both $x^2+16y^2$ and $4x^2+4xy+5y^2$ represent $2^2$. 
Then the assertion follows from Theorem \ref{MainTheorem}. 
\end{proof}


Finally we prove Example \ref{exa-108}, 
which is more complicated since the conductor (=6) is a composite number. 

\begin{proof}[Proof of Example \rm{\ref{exa-108}}]
We see that the discriminant of $4x^2+2xy+7y^2$ is $D=-108$. 
Since $K=\Q(\sqrt{D})=\Q(\sqrt{-3})$, 
we have $f=\sqrt{D/d_K}=6$. 
Note that $C(-108)=\{[x^2+27y^2], [4x^2+2xy+7y^2], [4x^2-2xy+7y^2]\}
\simeq\Z/3\Z$. 
Using cubic reciprocity, we see that 
for a prime $p$, 
the followings hold 
(see Ireland and Rosen \cite[Proposition 9.6.2]{Ire}). 
\begin{align*}
&\text{$p=x^2+27y^2$ has an integer solution} \\
\iff 
&\text{$p\equiv1 \bmod3$ and $2$ is a cubic residue modulo $p$.} \\
\hspace{5pt}\\
&\text{$p=4x^2 \pm2xy+7y^2$ has an integer solution} \\
\iff 
&\text{$p\equiv1 \bmod3$ and $2$ is not a cubic residue modulo $p$.} 
\end{align*}
Now consider the surjections
\begin{align*}
&C(-108)\twoheadrightarrow C(-27), \;\;          &&C(-27)\twoheadrightarrow C(-3),\\
&\text{$[x^2+27y^2]\mapsto[x^2+xy+7y^2]$},     &&[x^2+xy+7y^2]\mapsto[x^2+xy+y^2].\\
&\text{$[4x^2+2xy+7y^2]\mapsto[x^2+xy+7y^2]$}, \\
&\text{$[4x^2-2xy+7y^2]\mapsto[x^2+xy+7y^2]$.}
\end{align*}
We easily see that 
$x^2+xy+7y^2$ represents $2^0$. 
By Proposition \ref{zuretaraarawasu}, 
it follows that both $x^2+27y^2$ and $4x^2\pm2xy+7y^2$ represent $2^2$. 
Since $x^2+xy+y^2$ does not represent $2$, we see that $x^2+xy+y^2$ does not represent $2^h$ for any odd integer $h\geq1$. 
The contrapositive of Proposition \ref{arawasu} implies 
that both $x^2+27y^2$ and $4x^2\pm2xy+7y^2$ 
do not represent $2^h$ for any odd integer $h\geq1$. 

Furthermore consider the surjection
\begin{align*}
&C(-108)\twoheadrightarrow C(-12), \;\; \\
&\text{$[x^2+27y^2]\mapsto[x^2+3y^2]$}, \\
&\text{$[4x^2+2xy+7y^2]\mapsto[x^2+3y^2]$}, \\ 
&\text{$[4x^2-2xy+7y^2]\mapsto[x^2+3y^2]$.}
\end{align*}
We easily see that 
$x^2+3y^2$ represents $3^0$, $3^1$. 
By Proposition \ref{zuretaraarawasu}, 
it follows that both $x^2+27y^2$ and $4x^2\pm2xy+7y^2$ represent $3^2$, $3^3$. 
By Proposition \ref{zuretaraarawasanai}, 
it follows that both $x^2+27y^2$ and $4x^2\pm2xy+7y^2$ do not represent $3^1$. 
Then the assertion follows from Theorem \ref{MainTheorem}. 
\end{proof}

\section*{Acknowledgements}
The author would like to thank my supervisor 
Tomokazu Kashio 
for grateful advices. 
The author also thanks Master's students 
Masashi Katou, 
Yudai Tanaka 
and 
Hyuuga Yoshizaki 
for useful discussions.



\end{document}